\newcommand{\lyxdot}{.}
\newcommand{\lyxaddress}[1]{
	\par {\raggedright #1
	\vspace{1.4em}
	\noindent\par}
}
\theoremstyle{plain}
\newtheorem{thm}{\protect\theoremname}
\theoremstyle{definition}
\newtheorem{defn}[thm]{\protect\definitionname}
\theoremstyle{remark}
\newtheorem{rem}[thm]{\protect\remarkname}
\theoremstyle{plain}
\newtheorem{cor}[thm]{\protect\corollaryname}
\newenvironment{proof}[1][\protect\proofname]{\par
	\normalfont\topsep6\p@\@plus6\p@\relax
	\trivlist
	\itemindent\parindent
	\item[\hskip\labelsep\scshape #1]\ignorespaces
}{%
	\endtrivlist\@endpefalse
}
\providecommand{\proofname}{Proof}
\providecommand{\corollaryname}{Corollary}
\providecommand{\definitionname}{Definition}
\providecommand{\remarkname}{Remark}
\providecommand{\theoremname}{Theorem}
\begin{document}
\title{Non-resonant invariant foliations of quasi-periodically forced systems}
\author{Robert Szalai}
\date{15th March 2024}
\maketitle

\lyxaddress{School of Engineering Mathematics and Technology, University of Bristol,
Ada Lovelace Building, Tankard's Close, Bristol BS8 1TW, email: r.szalai@bristol.ac.uk}
\begin{abstract}
We show the existence and uniqueness of invariant foliations about
invariant tori in analytic discrete-time dynamical systems. The parametrisation
method is used prove the result. Our theory is a foundational block
of data-driven model order reduction, that can only be carried out
using invariant foliations. The theory is illustrated by two mechanical
examples, where instantaneous frequencies and damping ratios are calculated
about the invariant tori.
\end{abstract}

\paragraph{Keywords}

Invariant foliation, invariant manifold, parametrisation method, reduced
order model

\paragraph{MSC codes}

34A34 34C45 37M21 37C86

\section{Introduction}

It was found after an exhaustive review of possible ROM representations
that invariant foliations are the only mathematical structure that
can be fitted to existing data \cite{Szalai2023Fol}. The theory of
invariant foliations is intertwined with that of invariant manifolds
\cite{PalisDeMelo1982}. Historically, stable, unstable and centre
manifolds and otherwise normally-hyperbolic invariant manifolds were
at the focus of investigation \cite{hirsch1970,Fenichel,Mane1978,wiggins2013normally}.
Later, it was found that hyperbolicity can also be achieved in a nonlinear
coordinate systems about an invariant object, which led to non-resonant
invariant manifolds \cite{delaLlave1997}. The parametrisation method
was introduced to provide a comprehensive theoretical framework and
prove existence and uniqueness of non-resonant invariant manifolds
under general circumstances, for example in infinite dimensions \cite{CabreLlave2003,CabreP3-2005}.
Moreover, the parametrisation method proved to be successful in inspiring
numerical techniques \cite{Haro2016,VIZZACCARO2021normalForm,Haller2016}
to calculate invariant manifolds. The parametrisation method is similar
to normal form theory \cite{wiggins2003introduction}, which also
considers non-resonant terms.

Invariant foliations received less attention and the focus was mainly
on stable and unstable foliations near stable and unstable manifolds,
sometimes bundled together with the centre manifold \cite{BatesFoliations2000,AulbachFoliation2003}.
Non-resonant invariant foliations about periodic points were shown
to exist and unique in \cite{Szalai2020ISF}. Invariant foliations
may exist under less restrictive circumstances. Given their potential
to obtain data-driven reduced order models (ROM), exploring such situations
is warranted. In this paper we explore the case of quasi-periodic
systems and the existence of invariant foliations about invariant
tori. The theory follows a similar approach to \cite{HaroTori2006},
in that we use the parametrisation method to describe the geometry
of the foliation and the Kantorovich-Newton theorem to prove its existence
and uniqueness.

Koopman eigenfunctions that correspond to point spectra also describe
foliations \cite{Mauroy201319}. The Koopman operator is globally
defined, hence does not require the existence of a periodic or quasi-periodic
orbit. The drawback of the Koopman approach is that the dynamics on
the invariant foliation is assumed to be linear. Whenever the dynamics
is not linearisable, continuous spectrum occurs \cite{Mezic2005}.
The numerical treatment of the continuous spectrum requires careful
attention \cite{Colbrook2023}, and it is not clear that accounting
for a large number of spectral points still leads to a ROM. We also
note that being linearisable is a generic property, and as we can
see from our examples, some nonlinear phenomena can be recovered from
seemingly linear systems if we consider the nonlinear frame they are
defined in.

We also note that uniqueness results are local and numerically fragile.
Under most circumstances, there is a family of invariant manifolds/foliations
that satisfy the invariance equations, so we need a constraint that
singles one out. The parametrisation method \cite{CabreLlave2003}
uses smoothness as a deciding factor, while exponential rates can
also be used \cite{Llave1995}, at least for pseudo-stable manifolds.
As was pointed out in \cite{delaLlave1997}, the two definitions may
not lead to the same result. From the perspective of applications,
smoothness of a manifold is very difficult to establish, especially
when data is involved.

While the present paper is restricted to the neighbourhood of quasi-periodic
tori, the direction of travel for our investigation is to extend the
theory of invariant foliations to a large class of systems, preferably
with multiple (complex) attractors.

The structure of the paper is the following. We first describe the
set-up for our analysis, which includes a quick review of exponential
dichotomies for non-autonomous systems. The second section states
and proves our theorem about the existence and uniqueness of invariant
foliations for quasi-periodically forced systems. The third section
details the numerical methods to calculate the foliation, which is
necessary, given that there is significant difference between the
theoretical results and the discretised solution of the invariance
equation. Finally, in the fourth section, we illustrate our results
through two examples. The appendix includes known results about the
existence of invariant tori and we recall the Kantorovich-Newton theorem
for completeness.

\subsection{\label{subsec:Setup}Problem setting}

We assume a deterministic and discrete-time system in the form of
\begin{equation}
\begin{array}{rl}
\boldsymbol{x}_{k+1} & \negthickspace\negthickspace\negthickspace=\boldsymbol{F}\left(\boldsymbol{x}_{k},\boldsymbol{\theta}_{k}\right)\\
\boldsymbol{\theta}_{k+1} & \negthickspace\negthickspace\negthickspace=\boldsymbol{\theta}_{k}+\boldsymbol{\omega}
\end{array},\;k=1,2,\ldots,\label{eq:MAPSysDef}
\end{equation}
where $\boldsymbol{x}_{k}\in X$, $\boldsymbol{\theta}_{k}\in\mathbb{T}^{d}$,
$\boldsymbol{F}:\mathbb{T}^{d}\times X\to X$ is an analytic function,
$X$ is an $n$-dimensional inner product vector space, $\mathbb{T}^{d}$
is the $d$-dimensional torus and $\boldsymbol{\omega}\in\mathbb{T}^{d}$
is a constant angle of rotation on the torus. We also assume an invariant
torus 
\begin{equation}
\mathcal{T}=\left\{ \boldsymbol{K}\left(\boldsymbol{\theta}\right):\boldsymbol{\theta}\in\mathbb{T}^{d}\right\} ,\label{eq:TOR-geometry}
\end{equation}
where $\boldsymbol{K}:\mathbb{T}^{d}\to X$ is an analytic function
and satisfies the invariance equation
\begin{equation}
\boldsymbol{K}\left(\boldsymbol{\theta}+\omega\right)=\boldsymbol{F}\left(\boldsymbol{K}\left(\boldsymbol{\theta}\right),\boldsymbol{\theta}\right).\label{eq:TOR-invariances}
\end{equation}

The linear dynamics about the invariant torus $\mathcal{T}$ holds
a key to finding invariant foliations. Therefore we define 
\[
\boldsymbol{A}\left(\boldsymbol{\theta}\right)=D_{1}\boldsymbol{F}\left(\boldsymbol{K}\left(\boldsymbol{\theta}\right),\boldsymbol{\theta}\right)
\]
and consider the system
\begin{equation}
\begin{array}{rl}
\boldsymbol{x}_{k+1} & \negthickspace\negthickspace\negthickspace=\boldsymbol{A}\left(\boldsymbol{\theta}_{k}\right)\boldsymbol{x}_{k}\\
\boldsymbol{\theta}_{k+1} & \negthickspace\negthickspace\negthickspace=\boldsymbol{\theta}_{k}+\boldsymbol{\omega}
\end{array},\;k=1,2,\ldots.\label{eq:ED-linsys}
\end{equation}
The phase space of (\ref{eq:ED-linsys}), denoted by $E=X\times\mathbb{T}^{d}$
is called the trivial vector bundle. The decomposition of $E$ is
a Whitney sum of sub-bundles, $E=E_{1}\oplus\cdots\oplus E_{m}$.
Each sub-bundle is composed of fibres, which means that for each $\boldsymbol{\theta}\in\mathbb{T}^{d}$,
$E_{j\boldsymbol{\theta}}$\textbf{ }is a subspace of $X$, and therefore
$X=E_{1\boldsymbol{\theta}}\oplus\cdots\oplus E_{m\boldsymbol{\theta}}$.
Note that the same decomposition can be defined for the adjoint trivial
bundle $E^{\star}=X^{\star}\times\mathbb{T}^{d}$, as well. For the
sub-bundles to be meaningful they need to be invariant under equation
(\ref{eq:ED-linsys}), which means that for each fibre we must have
\begin{equation}
E_{j\boldsymbol{\theta}+\boldsymbol{\omega}}=\boldsymbol{A}\left(\boldsymbol{\theta}\right)E_{j\boldsymbol{\theta}}\label{eq:ED-inv-bundle}
\end{equation}
or for the adjoint sub-bundle, 
\begin{equation}
E_{j\boldsymbol{\theta}}^{\star}=\boldsymbol{A}^{\star}\left(\boldsymbol{\theta}\right)E_{j\boldsymbol{\theta}+\boldsymbol{\omega}}^{\star}.\label{eq:ED-inv-adj-bundle}
\end{equation}

In order to make the calculations of invariant vector bundles computationally
accessible, we represent them by families of orthogonal matrices.
The two invariance equations (\ref{eq:ED-inv-bundle}), (\ref{eq:ED-inv-adj-bundle})
can be condensed into one, if we use a projection $\boldsymbol{P}:\mathbb{T}^{d}\to L\left(X,X\right)$,
in which case $E_{j\boldsymbol{\theta}}=\mathrm{rng}\:\boldsymbol{P}\left(\boldsymbol{\theta}\right)$
and $E_{j\boldsymbol{\theta}}^{\star}=\mathrm{rng}\:\boldsymbol{P}_{j}^{\star}\left(\boldsymbol{\theta}\right)=\left(\ker\boldsymbol{P}_{j}\left(\boldsymbol{\theta}\right)\right)^{\perp}$.
The invariance equation for the projection is 
\begin{equation}
\boldsymbol{P}\left(\theta+\omega\right)\boldsymbol{A}\left(\theta\right)=\boldsymbol{A}\left(\theta\right)\boldsymbol{P}\left(\theta\right).\label{eq:ED-invariance}
\end{equation}
A projection $\boldsymbol{P}$ can be factored into a product $\boldsymbol{P}\left(\boldsymbol{\theta}\right)=\boldsymbol{W}\left(\boldsymbol{\theta}\right)\boldsymbol{U}\left(\boldsymbol{\theta}\right)$,
where $\boldsymbol{W}$ is an orthogonal operator having the same
range as $\boldsymbol{P}$ and therefore $\boldsymbol{U}\left(\boldsymbol{\theta}\right)=\boldsymbol{W}^{T}\left(\boldsymbol{\theta}\right)\boldsymbol{P}\left(\boldsymbol{\theta}\right)$.
Due to $\boldsymbol{P}$ being a projection, we also have $\boldsymbol{U}\left(\boldsymbol{\theta}\right)\boldsymbol{W}\left(\boldsymbol{\theta}\right)=\boldsymbol{I}$.
Substituting the factorisation of $\boldsymbol{P}$ into invariance
equation (\ref{eq:ED-invariance}) produces
\begin{equation}
\boldsymbol{W}\left(\boldsymbol{\theta}+\omega\right)\boldsymbol{U}\left(\boldsymbol{\theta}+\omega\right)\boldsymbol{A}\left(\boldsymbol{\theta}\right)=\boldsymbol{A}\left(\boldsymbol{\theta}\right)\boldsymbol{W}\left(\boldsymbol{\theta}\right)\boldsymbol{U}\left(\boldsymbol{\theta}\right).\label{eq:ED-expanded-invariance}
\end{equation}
Multiplying (\ref{eq:ED-expanded-invariance}) by $\boldsymbol{U}\left(\boldsymbol{\theta}+\boldsymbol{\omega}\right)$
from the left yields 
\begin{equation}
\boldsymbol{U}\left(\boldsymbol{\theta}+\boldsymbol{\omega}\right)\boldsymbol{A}\left(\boldsymbol{\theta}\right)=\boldsymbol{\Lambda}\left(\boldsymbol{\theta}\right)\boldsymbol{U}\left(\boldsymbol{\theta}\right),\label{eq:ED-left-invariance}
\end{equation}
where $\boldsymbol{\Lambda}_{i}\left(\boldsymbol{\theta}\right)=\boldsymbol{U}\left(\boldsymbol{\theta}+\boldsymbol{\omega}\right)\boldsymbol{A}\left(\boldsymbol{\theta}\right)\boldsymbol{W}\left(\boldsymbol{\theta}\right)$
and multiplying (\ref{eq:ED-expanded-invariance}) by $\boldsymbol{W}\left(\boldsymbol{\theta}\right)$
from the right yields 
\begin{equation}
\boldsymbol{W}\left(\boldsymbol{\theta}+\boldsymbol{\omega}\right)\boldsymbol{\Lambda}\left(\boldsymbol{\theta}\right)=\boldsymbol{A}\left(\boldsymbol{\theta}\right)\boldsymbol{W}\left(\boldsymbol{\theta}\right).\label{eq:ED-right-invariance}
\end{equation}
Equations (\ref{eq:ED-left-invariance}) and (\ref{eq:ED-right-invariance})
are precursors to the invariance equations of foliations and manifolds.
One can also think of equations (\ref{eq:ED-left-invariance}) and
(\ref{eq:ED-right-invariance}) as generalisations for eigenvalues
and eigenvectors, where $\boldsymbol{\Lambda}$ takes the place of
the eigenvalue, $\boldsymbol{W}$ the right eigenvector and $\boldsymbol{U}$
the left eigenvector.

Invariance alone is not sufficient to characterise the linear dynamics,
we also need a dynamic property that decides how to split up the linear
system (\ref{eq:ED-linsys}). This notion is the exponential dichotomy,
which has been introduced in \cite{SACKER1978320} for non-autonomous
systems, though similar notions were used before.
\begin{defn}
Given a projection $\boldsymbol{P}$, satisfying the invariance equation
(\ref{eq:ED-invariance}), we say that there is an exponential dichotomy
for a positive real number $\alpha>0$ if there exists $C>0$ such
that for all $k\ge0$ and all $\boldsymbol{\theta}\in\mathbb{T}^{d}$
the inequalities hold
\begin{align*}
\left|\boldsymbol{A}\left(\boldsymbol{\theta}+\left(k-1\right)\boldsymbol{\omega}\right)\cdots\boldsymbol{A}\left(\boldsymbol{\theta}+\boldsymbol{\omega}\right)\boldsymbol{A}\left(\boldsymbol{\theta}\right)\boldsymbol{P}\left(\boldsymbol{\theta}\right)\right| & \le C\alpha^{k},\\
\left|\boldsymbol{A}^{-1}\left(\boldsymbol{\theta}-k\boldsymbol{\omega}\right)\cdots\boldsymbol{A}^{-1}\left(\boldsymbol{\theta}-\boldsymbol{\omega}\right)\left(\boldsymbol{I}-\boldsymbol{P}\left(\boldsymbol{\theta}\right)\right)\right| & \le C\alpha^{-k}.
\end{align*}
The set of $\alpha$ values for which there exists an exponential
dichotomy is called the resolvent set and denoted by $\mathcal{R}\left(\boldsymbol{A};\boldsymbol{\omega}\right)$.
The complement $\varSigma\left(\boldsymbol{A};\boldsymbol{\omega}\right)=\mathbb{R}^{+}\setminus\mathcal{R}\left(\boldsymbol{A};\boldsymbol{\omega}\right)$
is called the dichotomy spectrum of system (\ref{eq:ED-linsys}).
\end{defn}
\begin{rem}
Matrix $\boldsymbol{A}\left(\boldsymbol{\theta}-\boldsymbol{\omega}\right)$
only needs to be invertible on the range of $\boldsymbol{I}-\boldsymbol{P}\left(\boldsymbol{\theta}\right)$.
This allows us to have $\boldsymbol{A}\left(\boldsymbol{\theta}\right)\boldsymbol{P}\left(\boldsymbol{\theta}\right)=\boldsymbol{0}$
and arbitrarily small $\alpha>0$.
\end{rem}
\begin{cor}
\label{cor:ED-linear-solution}Equation
\[
\boldsymbol{x}\left(\boldsymbol{\theta}+\boldsymbol{\omega}\right)=\boldsymbol{A}\left(\boldsymbol{\theta}\right)\boldsymbol{x}\left(\boldsymbol{\theta}\right)+\boldsymbol{\eta}\left(\boldsymbol{\theta}\right),
\]
where $\boldsymbol{\eta}$ and $\boldsymbol{A}$ are analytic has
a unique solution within the set of analytic functions if $1\in\mathcal{R}\left(\boldsymbol{A};\boldsymbol{\omega}\right)$.
\end{cor}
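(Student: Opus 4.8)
The plan is to read the equation as $\mathcal{L}\boldsymbol{x}=\boldsymbol{\eta}$, where $\left(\mathcal{L}\boldsymbol{x}\right)\left(\boldsymbol{\theta}\right)=\boldsymbol{x}\left(\boldsymbol{\theta}+\boldsymbol{\omega}\right)-\boldsymbol{A}\left(\boldsymbol{\theta}\right)\boldsymbol{x}\left(\boldsymbol{\theta}\right)$, and to produce a bounded inverse of $\mathcal{L}$ on a space of analytic functions by writing down the associated Green's operator. Since $1$ lies in the resolvent set, which is open and on whose connected components the dichotomy projection is constant (cf. \cite{SACKER1978320}), I would first fix $0<\alpha_{-}<1<\alpha_{+}$, both in $\mathcal{R}\left(\boldsymbol{A};\boldsymbol{\omega}\right)$, sharing a common invariant projection $\boldsymbol{P}$; then $\left|\boldsymbol{A}\left(\boldsymbol{\theta}+\left(k-1\right)\boldsymbol{\omega}\right)\cdots\boldsymbol{A}\left(\boldsymbol{\theta}\right)\boldsymbol{P}\left(\boldsymbol{\theta}\right)\right|\le C\alpha_{-}^{k}$ decays, while $\boldsymbol{A}\left(\boldsymbol{\theta}\right)$ is invertible from $\ker\boldsymbol{P}\left(\boldsymbol{\theta}\right)$ onto $\ker\boldsymbol{P}\left(\boldsymbol{\theta}+\boldsymbol{\omega}\right)$ with $\left|\boldsymbol{A}^{-1}\left(\boldsymbol{\theta}-k\boldsymbol{\omega}\right)\cdots\boldsymbol{A}^{-1}\left(\boldsymbol{\theta}-\boldsymbol{\omega}\right)\left(\boldsymbol{I}-\boldsymbol{P}\left(\boldsymbol{\theta}\right)\right)\right|\le C\alpha_{+}^{-k}$ also decaying. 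I would carry out the construction in the Banach space of bounded analytic maps on a complex strip $\mathbb{T}_{\rho}^{d}$ to which $\boldsymbol{A}$, $\boldsymbol{\eta}$ and $\boldsymbol{P}$ extend, with $\rho$ small enough that, by roughness of exponential dichotomies, the two estimates above persist on the strip with slightly worse $C$ and slightly relaxed $\alpha_{\pm}$.

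For existence, split $\boldsymbol{x}=\boldsymbol{P}\boldsymbol{x}+\left(\boldsymbol{I}-\boldsymbol{P}\right)\boldsymbol{x}$. Applying $\boldsymbol{P}\left(\boldsymbol{\theta}\right)$ to the equation and using the invariance (\ref{eq:ED-invariance}) gives a recursion for $\boldsymbol{P}\boldsymbol{x}$ that is iterated backwards; applying $\boldsymbol{I}-\boldsymbol{P}\left(\boldsymbol{\theta}+\boldsymbol{\omega}\right)$ and using invertibility on $\ker\boldsymbol{P}$ gives a recursion for $\left(\boldsymbol{I}-\boldsymbol{P}\right)\boldsymbol{x}$ that is iterated forwards. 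Letting the number of iterates tend to infinity, the boundary terms vanish by the dichotomy and one is left with
\begin{align*}
\boldsymbol{x}\left(\boldsymbol{\theta}\right) & =\sum_{j\ge1}\boldsymbol{A}\left(\boldsymbol{\theta}-\boldsymbol{\omega}\right)\cdots\boldsymbol{A}\left(\boldsymbol{\theta}-\left(j-1\right)\boldsymbol{\omega}\right)\boldsymbol{P}\left(\boldsymbol{\theta}-\left(j-1\right)\boldsymbol{\omega}\right)\boldsymbol{\eta}\left(\boldsymbol{\theta}-j\boldsymbol{\omega}\right)\\
 & \quad-\sum_{j\ge0}\boldsymbol{A}^{-1}\left(\boldsymbol{\theta}\right)\cdots\boldsymbol{A}^{-1}\left(\boldsymbol{\theta}+j\boldsymbol{\omega}\right)\left(\boldsymbol{I}-\boldsymbol{P}\left(\boldsymbol{\theta}+\left(j+1\right)\boldsymbol{\omega}\right)\right)\boldsymbol{\eta}\left(\boldsymbol{\theta}+j\boldsymbol{\omega}\right),
\end{align*}
the empty product being read as the identity when $j=1$. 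Each summand is analytic on $\mathbb{T}_{\rho}^{d}$, the first series is dominated by $C\left\Vert \boldsymbol{\eta}\right\Vert \sum_{j\ge1}\alpha_{-}^{j-1}$ and the second by $C\left\Vert \boldsymbol{\eta}\right\Vert \sum_{j\ge0}\alpha_{+}^{-(j+1)}$, so both converge uniformly on the strip and the sum is analytic; a telescoping computation shows it solves $\mathcal{L}\boldsymbol{x}=\boldsymbol{\eta}$.

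For uniqueness, if $\boldsymbol{x}_{1},\boldsymbol{x}_{2}$ are two analytic solutions then $\boldsymbol{z}=\boldsymbol{x}_{1}-\boldsymbol{x}_{2}$ satisfies $\boldsymbol{z}\left(\boldsymbol{\theta}+\boldsymbol{\omega}\right)=\boldsymbol{A}\left(\boldsymbol{\theta}\right)\boldsymbol{z}\left(\boldsymbol{\theta}\right)$ and is bounded, being analytic on the compact torus. Iterating forwards, $\boldsymbol{P}\left(\boldsymbol{\theta}+k\boldsymbol{\omega}\right)\boldsymbol{z}\left(\boldsymbol{\theta}+k\boldsymbol{\omega}\right)=\boldsymbol{A}\left(\boldsymbol{\theta}+\left(k-1\right)\boldsymbol{\omega}\right)\cdots\boldsymbol{A}\left(\boldsymbol{\theta}\right)\boldsymbol{P}\left(\boldsymbol{\theta}\right)\boldsymbol{z}\left(\boldsymbol{\theta}\right)$ has norm at most $C\alpha_{-}^{k}\left\Vert \boldsymbol{z}\right\Vert \to0$, so $\boldsymbol{P}\boldsymbol{z}\equiv\boldsymbol{0}$; iterating backwards with the second estimate gives $\left(\boldsymbol{I}-\boldsymbol{P}\right)\boldsymbol{z}\equiv\boldsymbol{0}$, hence $\boldsymbol{z}\equiv\boldsymbol{0}$. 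The main obstacle is the analyticity claim rather than the linear-algebra bookkeeping: everything is transparent on the real torus, and the real work is verifying that the dichotomy estimates survive the complexification onto $\mathbb{T}_{\rho}^{d}$ — equivalently, that the bounded solution furnished by this Perron-type construction is genuinely analytic and not merely smooth — which I would handle by applying roughness of exponential dichotomies to the uniformly small perturbation of $\boldsymbol{A}$ obtained by displacing its argument into the strip.
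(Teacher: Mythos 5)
Your construction is essentially the paper's own argument: you split by the dichotomy projection $\boldsymbol{P}$ and solve forward on $\mathrm{rng}\,\boldsymbol{P}$ and via inverses on $\mathrm{rng}\left(\boldsymbol{I}-\boldsymbol{P}\right)$, and your explicit Perron-type series is precisely the limit of the two iterations the paper writes down, so the proposal is correct and takes the same route. The extra care you add --- choosing $\alpha_{-}<1<\alpha_{+}$ in the open resolvent interval containing $1$ so the rates are strictly separated, the uniqueness argument for the homogeneous equation, and the complex-strip/roughness justification of analyticity --- fills in details the paper leaves implicit rather than changing the approach.
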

\begin{proof}
We use a contraction mapping argument. The space $X\times\mathbb{T}^{d}$
is separated into two by the projection $\boldsymbol{P}$. On the
range of $\boldsymbol{P}$ we use the iteration $\boldsymbol{x}^{\left(k+1\right)}\left(\boldsymbol{\theta}+\boldsymbol{\omega}\right)=\boldsymbol{A}\left(\boldsymbol{\theta}\right)\boldsymbol{x}\left(k\right)\left(\boldsymbol{\theta}\right)+\boldsymbol{P}\left(\boldsymbol{\theta}+\boldsymbol{\omega}\right)\boldsymbol{\eta}\left(\boldsymbol{\theta}\right)$,
while on the range of $\boldsymbol{I}-\boldsymbol{P}$, we use the
iteration 
\[
\boldsymbol{x}^{\left(k+1\right)}\left(\boldsymbol{\theta}\right)=\boldsymbol{A}^{-1}\left(\boldsymbol{\theta}\right)\boldsymbol{x}^{\left(k\right)}\left(\boldsymbol{\theta}+\boldsymbol{\omega}\right)-\left(\boldsymbol{I}-\boldsymbol{P}\left(\boldsymbol{\theta}\right)\right)\boldsymbol{A}^{-1}\left(\boldsymbol{\theta}\right)\boldsymbol{\eta}\left(\boldsymbol{\theta}\right)
\]
to find a convergent solution.
\end{proof}
There are a few properties of the resolvent set or the spectrum that
we need to use. These properties are widely described in the literature,
in particular \cite{Aulbach-II,SACKER1978320}. The resolvent set
consists of a finite number of open intervals, which we denote by
\begin{gather}
\mathcal{R}_{0}=\left(0,\alpha_{1}\right),\mathcal{R}_{1}=\left(\beta_{1},\alpha_{2}\right),\ldots,\mathcal{R}_{m}=\left(\beta_{m},\infty\right)\;\text{or}\label{eq:ED-dichotomy-A}\\
\mathcal{R}_{1}=\left(\beta_{1},\alpha_{2}\right),\mathcal{R}_{2}=\left(\beta_{2},\alpha_{3}\right),\ldots,\mathcal{R}_{m}=\left(\beta_{m},\infty\right)\;\text{if\;\ensuremath{\alpha_{1}=0}}\nonumber 
\end{gather}
with ordering $\beta_{i}<\alpha_{i+1}$ and $\alpha_{i}<\beta_{i}$,
such that $\mathcal{R}\left(\boldsymbol{A};\boldsymbol{\omega}\right)=\bigcup_{j=0\,\text{or}\,1}^{m}\mathcal{R}_{j}$.
Note that the maximum $m$ is $\dim X$. Within a resolvent interval
for $\alpha\in\mathcal{R}_{j}$, the invariant projection $\boldsymbol{P}$
is independent of $\alpha$ and therefore can be denoted by $\boldsymbol{P}_{i}^{ED}$,
where $1\le i\le m\le n$. We can also create spectral intervals 

\begin{align}
\Sigma_{1} & =\left[\alpha_{1},\beta_{1}\right],\;\Sigma_{2}=\left[\alpha_{2},\beta_{2}\right],\ldots,\;\Sigma_{_{m}}=\left[\alpha_{m},\beta_{m}\right],\;\text{or}\label{eq:ED-spectrum-A}\\
\Sigma_{1} & =\left(\alpha_{1},\beta_{1}\right],\;\Sigma_{2}=\left[\alpha_{2},\beta_{2}\right],\ldots,\;\Sigma_{m}=\left[\alpha_{m},\beta_{m}\right]\;\text{if}\;\alpha_{1}=0,\label{eq:ED-spectrum-B}
\end{align}
that are sometimes just points. The full spectrum then becomes $\varSigma\left(\boldsymbol{A};\boldsymbol{\omega}\right)=\bigcup_{j=1}^{m}\Sigma_{j}$.
Note that if $\boldsymbol{P}_{i}^{ED}$ is an invariant projection,
so is $\boldsymbol{I}-\boldsymbol{P}_{i}^{ED}$. We also have the
inclusions 
\begin{equation}
\mathrm{rng}\boldsymbol{P}_{1}^{ED}\subset\mathrm{rng}\boldsymbol{P}_{2}^{ED}\subset\cdots\subset\mathrm{rng}\boldsymbol{P}_{m}^{ED}=X,\label{eq:ED-inclusions}
\end{equation}
and therefore $\boldsymbol{P}_{m}^{ED}=\boldsymbol{I}$. The inclusions
can be used to decompose the system (\ref{eq:ED-linsys}) into the
same number of subsystems as there are spectral intervals. We can
then define the spectral projections by
\[
\boldsymbol{P}_{1}=\boldsymbol{P}_{1}^{ED},\boldsymbol{P}_{2}=\boldsymbol{P}_{2}^{ED}-\boldsymbol{P}_{1}^{ED},\ldots,\boldsymbol{P}_{m}=\boldsymbol{P}_{m}^{ED}-\boldsymbol{P}_{m-1}^{ED},
\]
which are similarly invariant, that is, $\boldsymbol{P}_{i}\left(\boldsymbol{\theta}+\boldsymbol{\omega}\right)\boldsymbol{A}\left(\boldsymbol{\theta}\right)=\boldsymbol{A}\left(\boldsymbol{\theta}\right)\boldsymbol{P}_{i}\left(\boldsymbol{\theta}\right)$.
The inclusion property (\ref{eq:ED-inclusions}) implies that 
\[
\mathrm{rng}\boldsymbol{P}_{1}\oplus\mathrm{rng}\boldsymbol{P}_{2}\oplus\cdots\oplus\mathrm{rng}\boldsymbol{P}_{m}=X.
\]

The low-dimensional matrices then have the exponential bounds
\begin{align}
\left|\boldsymbol{\Lambda}_{i}\left(\boldsymbol{\theta}+\left(k-1\right)\boldsymbol{\omega}\right)\cdots\boldsymbol{\Lambda}_{i}\left(\boldsymbol{\theta}+\boldsymbol{\omega}\right)\boldsymbol{\Lambda}_{i}\left(\boldsymbol{\theta}\right)\right| & \le C\beta^{k}, & \forall\beta>\beta_{i},\label{eq:ED-straight-bound}\\
\left|\boldsymbol{\Lambda}_{i}\left(\boldsymbol{\theta}+\left(k-1\right)\boldsymbol{\omega}\right)\cdots\boldsymbol{\Lambda}_{i}\left(\boldsymbol{\theta}+\boldsymbol{\omega}\right)\boldsymbol{\Lambda}_{i}\left(\boldsymbol{\theta}\right)\right| & \ge C^{-1}\alpha^{k}, & \forall\alpha<\alpha_{i},\label{eq:ED-straight-lower}\\
\left|\boldsymbol{\Lambda}_{i}^{-1}\left(\boldsymbol{\theta}-k\boldsymbol{\omega}\right)\cdots\boldsymbol{\Lambda}_{i}^{-1}\left(\boldsymbol{\theta}-2\boldsymbol{\omega}\right)\boldsymbol{\Lambda}_{i}^{-1}\left(\boldsymbol{\theta}-\omega\right)\right| & \le C\alpha^{-k}, & \forall\alpha<\alpha_{i} & \;\text{if}\;\alpha_{i}\neq0\label{eq:ED-inverse-bound}\\
\left|\boldsymbol{\Lambda}_{i}^{-1}\left(\boldsymbol{\theta}-k\boldsymbol{\omega}\right)\cdots\boldsymbol{\Lambda}_{i}^{-1}\left(\boldsymbol{\theta}-2\boldsymbol{\omega}\right)\boldsymbol{\Lambda}_{i}^{-1}\left(\boldsymbol{\theta}-\omega\right)\right| & \ge C^{-1}\beta^{-k}, & \forall\beta>\beta_{i}.\label{eq:ED-inverse-lower}
\end{align}

\begin{cor}
\label{cor:semi-diagonalise}The linear system (\ref{eq:ED-linsys})
can be block-diagonalised into 
\begin{equation}
\begin{array}{rl}
\boldsymbol{x}_{k+1} & \negthickspace\negthickspace\negthickspace=\tilde{\boldsymbol{A}}\left(\boldsymbol{\theta}_{k}\right)\boldsymbol{x}_{k}\\
\boldsymbol{\theta}_{k+1} & \negthickspace\negthickspace\negthickspace=\boldsymbol{\theta}_{k}+\boldsymbol{\omega}
\end{array},\;k=1,2,\ldots,\label{eq:ED-Jordan}
\end{equation}
where
\[
\tilde{\boldsymbol{A}}\left(\boldsymbol{\theta}\right)=\begin{pmatrix}\boldsymbol{\Lambda}_{1}\left(\boldsymbol{\theta}\right) &  & \mathbf{0}\\
 & \ddots\\
\mathbf{0} &  & \boldsymbol{\Lambda}_{m}\left(\boldsymbol{\theta}\right)
\end{pmatrix}=\begin{pmatrix}\boldsymbol{U}_{1}\left(\boldsymbol{\theta}+\boldsymbol{\omega}\right)\\
\vdots\\
\boldsymbol{U}_{m}\left(\boldsymbol{\theta}+\boldsymbol{\omega}\right)
\end{pmatrix}\boldsymbol{A}\left(\boldsymbol{\theta}\right)\begin{pmatrix}\boldsymbol{W}_{1}\left(\boldsymbol{\theta}\right) & \cdots & \boldsymbol{W}_{m}\left(\boldsymbol{\theta}\right)\end{pmatrix}.
\]
\end{cor}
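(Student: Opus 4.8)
The plan is to exhibit the block-diagonalising transformation explicitly as the fibrewise change of coordinates $\boldsymbol{x}_{k}=\boldsymbol{W}(\boldsymbol{\theta}_{k})\boldsymbol{y}_{k}$, where $\boldsymbol{W}(\boldsymbol{\theta})=(\boldsymbol{W}_{1}(\boldsymbol{\theta})\;\cdots\;\boldsymbol{W}_{m}(\boldsymbol{\theta}))$ is the frame assembled from the orthogonal factors of the spectral projections $\boldsymbol{P}_{1},\dots,\boldsymbol{P}_{m}$, each factored as $\boldsymbol{P}_{i}=\boldsymbol{W}_{i}\boldsymbol{U}_{i}$ with $\boldsymbol{U}_{i}=\boldsymbol{W}_{i}^{T}\boldsymbol{P}_{i}$ and $\boldsymbol{U}_{i}\boldsymbol{W}_{i}=\boldsymbol{I}$. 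First I would check that $\boldsymbol{W}(\boldsymbol{\theta})$ is invertible at every $\boldsymbol{\theta}\in\mathbb{T}^{d}$, with inverse the operator obtained by stacking the $\boldsymbol{U}_{i}(\boldsymbol{\theta})$ vertically. This reduces to the biorthogonality relations $\boldsymbol{U}_{i}(\boldsymbol{\theta})\boldsymbol{W}_{j}(\boldsymbol{\theta})=\delta_{ij}\boldsymbol{I}$: the diagonal case is $\boldsymbol{U}_{i}\boldsymbol{W}_{i}=\boldsymbol{I}$, and for $i\neq j$ one writes $\boldsymbol{W}_{j}=\boldsymbol{W}_{j}\boldsymbol{U}_{j}\boldsymbol{W}_{j}=\boldsymbol{P}_{j}\boldsymbol{W}_{j}$ and $\boldsymbol{U}_{i}=\boldsymbol{W}_{i}^{T}\boldsymbol{P}_{i}$, so $\boldsymbol{U}_{i}\boldsymbol{W}_{j}=\boldsymbol{W}_{i}^{T}\boldsymbol{P}_{i}\boldsymbol{P}_{j}\boldsymbol{W}_{j}=\boldsymbol{0}$ because $\boldsymbol{P}_{i}\boldsymbol{P}_{j}=\boldsymbol{0}$ for $i\neq j$, which follows from the nesting (\ref{eq:ED-inclusions}) of the dichotomy projections (one has $\boldsymbol{P}_{i}^{ED}\boldsymbol{P}_{j}^{ED}=\boldsymbol{P}_{\min(i,j)}^{ED}$). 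Together with the Whitney-sum identity $\mathrm{rng}\,\boldsymbol{P}_{1}\oplus\cdots\oplus\mathrm{rng}\,\boldsymbol{P}_{m}=X$, this shows $\boldsymbol{W}(\boldsymbol{\theta})$ is a genuine change of frame.

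Next I would substitute $\boldsymbol{x}_{k}=\boldsymbol{W}(\boldsymbol{\theta}_{k})\boldsymbol{y}_{k}$ into (\ref{eq:ED-linsys}). Using $\boldsymbol{\theta}_{k+1}=\boldsymbol{\theta}_{k}+\boldsymbol{\omega}$, the state equation becomes $\boldsymbol{W}(\boldsymbol{\theta}_{k}+\boldsymbol{\omega})\boldsymbol{y}_{k+1}=\boldsymbol{A}(\boldsymbol{\theta}_{k})\boldsymbol{W}(\boldsymbol{\theta}_{k})\boldsymbol{y}_{k}$, and left-multiplying by the stacked inverse gives $\boldsymbol{y}_{k+1}=\tilde{\boldsymbol{A}}(\boldsymbol{\theta}_{k})\boldsymbol{y}_{k}$ with $\tilde{\boldsymbol{A}}(\boldsymbol{\theta})=\boldsymbol{U}(\boldsymbol{\theta}+\boldsymbol{\omega})\boldsymbol{A}(\boldsymbol{\theta})\boldsymbol{W}(\boldsymbol{\theta})$, which is exactly the factored right-hand side of (\ref{eq:ED-Jordan}). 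To read off the block structure, the $(i,j)$ block of $\tilde{\boldsymbol{A}}(\boldsymbol{\theta})$ is $\boldsymbol{U}_{i}(\boldsymbol{\theta}+\boldsymbol{\omega})\boldsymbol{A}(\boldsymbol{\theta})\boldsymbol{W}_{j}(\boldsymbol{\theta})$; applying the left-invariance identity (\ref{eq:ED-left-invariance}) for $\boldsymbol{P}_{i}$, namely $\boldsymbol{U}_{i}(\boldsymbol{\theta}+\boldsymbol{\omega})\boldsymbol{A}(\boldsymbol{\theta})=\boldsymbol{\Lambda}_{i}(\boldsymbol{\theta})\boldsymbol{U}_{i}(\boldsymbol{\theta})$, this equals $\boldsymbol{\Lambda}_{i}(\boldsymbol{\theta})\boldsymbol{U}_{i}(\boldsymbol{\theta})\boldsymbol{W}_{j}(\boldsymbol{\theta})=\delta_{ij}\boldsymbol{\Lambda}_{i}(\boldsymbol{\theta})$ by the biorthogonality just established. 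Hence $\tilde{\boldsymbol{A}}$ is block-diagonal with diagonal blocks $\boldsymbol{\Lambda}_{i}$, as claimed. (Equation (\ref{eq:ED-right-invariance}) gives the dual statement and, combined with the bounds (\ref{eq:ED-straight-bound})--(\ref{eq:ED-inverse-lower}), that each block carries the correct dichotomy data.)

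I do not expect a substantial obstacle here: the statement is essentially an assembly of the invariance identities (\ref{eq:ED-invariance}), (\ref{eq:ED-left-invariance}) and (\ref{eq:ED-right-invariance}) together with the partition-of-unity property of the spectral projections. The only point that merits care is the regularity of the frame: one must know that each spectral projection $\boldsymbol{P}_{i}$ admits an analytic orthogonal factor $\boldsymbol{W}_{i}$ defined on all of $\mathbb{T}^{d}$ and that the assembled $\boldsymbol{W}(\boldsymbol{\theta})$ has $\sup_{\boldsymbol{\theta}}|\boldsymbol{W}(\boldsymbol{\theta})^{-1}|<\infty$, so that (\ref{eq:ED-Jordan}) is conjugate to (\ref{eq:ED-linsys}) in the analytic category with uniformly bounded transfer matrix. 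This rests on the analytic dependence of the dichotomy projections recalled from \cite{Aulbach-II,SACKER1978320} and on compactness of $\mathbb{T}^{d}$; should one wish to avoid assuming the orthogonal factors globally, the whole computation can instead be run with the globally defined projections $\boldsymbol{P}_{i}$, the orthogonal frames being introduced only fibrewise.
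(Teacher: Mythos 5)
Your argument is correct and is exactly the reasoning the paper leaves implicit: the paper states this corollary without proof as an immediate consequence of the spectral projections, their factorisation $\boldsymbol{P}_{i}=\boldsymbol{W}_{i}\boldsymbol{U}_{i}$, the invariance identity (\ref{eq:ED-left-invariance}) applied blockwise, and the biorthogonality $\boldsymbol{U}_{p}\boldsymbol{W}_{q}=\delta_{pq}\boldsymbol{I}$ that it later uses in the proof of Theorem \ref{thm:exists-unique}. Your write-up simply supplies these details (including the mutual annihilation $\boldsymbol{P}_{i}\boldsymbol{P}_{j}=\boldsymbol{0}$ from the nested dichotomy projections and the regularity caveat on the frame), so no further comment is needed.
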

\begin{defn}
Equation (\ref{eq:ED-linsys}) is called \emph{reducible}, if $\boldsymbol{\Lambda}_{1},\ldots,\boldsymbol{\Lambda}_{m}$
can be made constant.
\end{defn}
As mentioned in section (\ref{subsec:Setup}), it is worth repeating
that if we are dealing with a system with a single forcing frequency
in continuous time, reducibility is guaranteed by Floquet theory.
Not all systems are reducible, and this can be the source of strange
non-chaotic attractors \cite{GrebogiSNA}. However, from a numerical
perspective, we can always compute the eigenvalues of the discretised
transfer operator and therefore it is difficult to tell if a system
is in fact irreducible.

\section{Invariant foliation}

Within the context of this paper, an invariant foliation is a family
of differentiable manifolds, called leaves, that fill up the phase
space $X\times\mathbb{T}^{d}$ and are mapped onto each other by the
map (\ref{eq:MAPSysDef}). To represent a foliation, we use an analytic
function $\boldsymbol{U}:X\times\mathbb{T}^{d}\to Z$, where $Z$
is an inner product vector space such that $\dim Z<\dim X$. The leaves
of the foliation are parametrised by the elements of $Z\times\mathbb{T}^{d}$
and given as level surfaces of the representing function $\boldsymbol{U}$
in the form of
\[
\mathcal{L}_{\boldsymbol{z},\boldsymbol{\theta}}=\left\{ \left(\boldsymbol{x},\boldsymbol{\theta}\right)\in X\times\mathbb{T}^{d}:\boldsymbol{U}\left(\boldsymbol{x},\boldsymbol{\theta}\right)=\boldsymbol{z}\right\} .
\]
In our notation the foliation is the collection
\begin{equation}
\mathcal{F}=\left\{ \mathcal{L}_{\boldsymbol{z},\boldsymbol{\theta}}:\left(\boldsymbol{z},\boldsymbol{\theta}\right)\in Z\times\mathbb{T}^{d}\right\} .\label{eq:FOIL-geometry}
\end{equation}
Foliation $\mathcal{F}$ is invariant if there exists a map $\boldsymbol{R}:Z\times\mathbb{T}^{d}\to Z$,
such that the invariance equation 
\begin{equation}
\boldsymbol{R}\left(\boldsymbol{U}\left(\boldsymbol{x},\boldsymbol{\theta}\right),\boldsymbol{\theta}\right)=\boldsymbol{U}\left(\boldsymbol{F}\left(\boldsymbol{x},\boldsymbol{\theta}\right),\boldsymbol{\theta}+\boldsymbol{\omega}\right)\label{eq:FOIL-invariance}
\end{equation}
holds. Equation (\ref{eq:FOIL-invariance}) is the same as the inclusion
$\boldsymbol{F}\left(\mathcal{L}_{\boldsymbol{z},\boldsymbol{\theta}},\boldsymbol{\theta}\right)\subset\mathcal{L}_{\boldsymbol{R}\left(\boldsymbol{z},\boldsymbol{\theta}\right),\boldsymbol{\theta}+\boldsymbol{\omega}}$.
A schematic of a foliation about an invariant closed curve can be
seen in figure \ref{fig:FOIL-schematic}.
\begin{figure}
\begin{centering}
\includegraphics[width=0.4\textwidth]{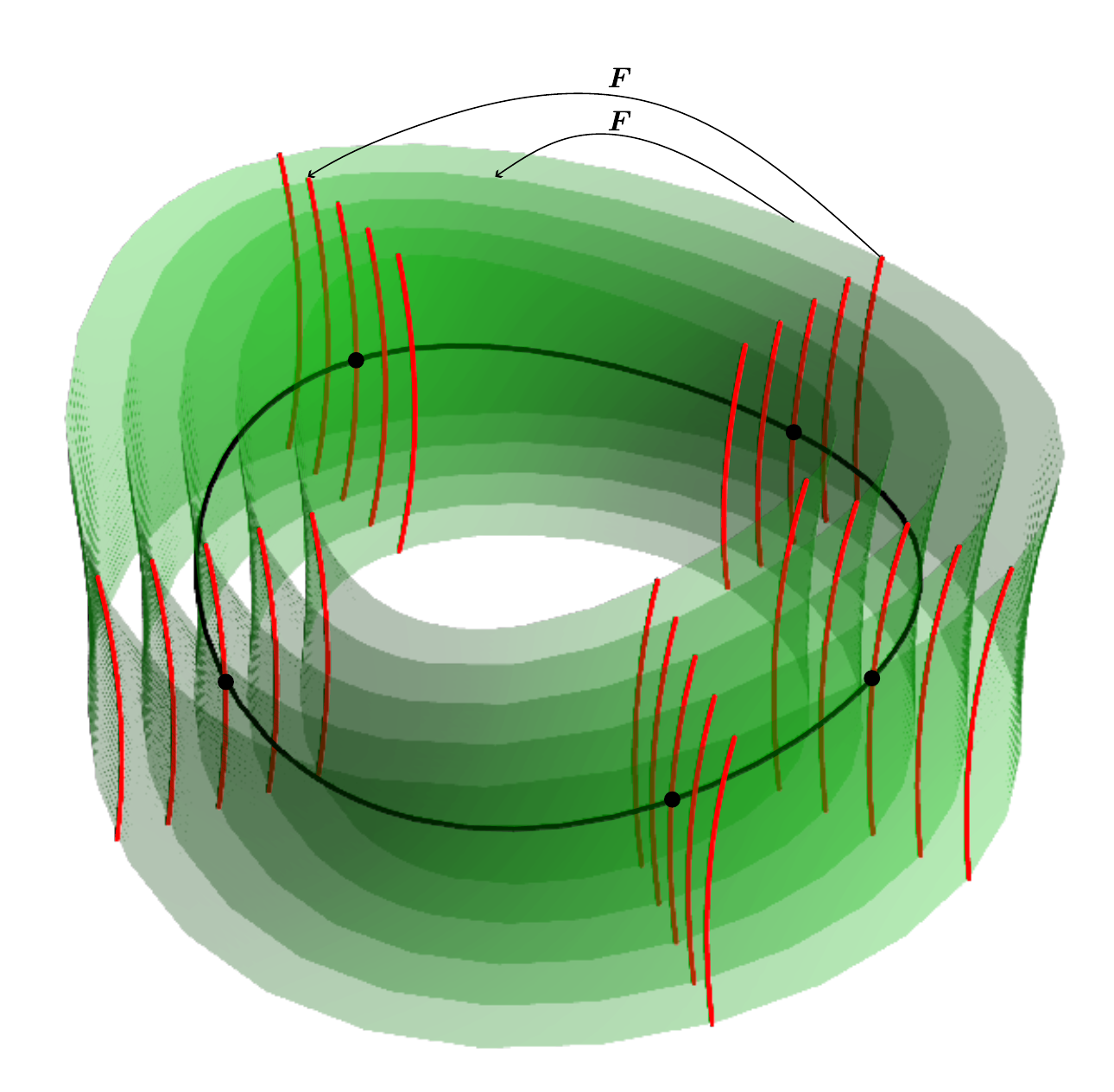}
\par\end{centering}
\caption{\label{fig:FOIL-schematic}Invariant foliation about an invariant
closed curve (torus), which is shown in black. The invariance equation
maps each red curve into another red curve, which all lie on a green
surface as parametrised by the $\boldsymbol{\theta}$ variable.}
\end{figure}

Without losing generality, we assume that $\boldsymbol{K}=\boldsymbol{0}$,
which can be achieved by defining the $\tilde{\boldsymbol{F}}\left(\boldsymbol{x},\boldsymbol{\theta}\right)=\boldsymbol{F}\left(\boldsymbol{K}\left(\boldsymbol{\theta}\right)+\boldsymbol{x},\boldsymbol{\theta}\right)-\boldsymbol{K}\left(\boldsymbol{\theta}+\boldsymbol{\omega}\right)$
and dropping the tilde from $\tilde{\boldsymbol{F}}$. To normalise
the relation between $\boldsymbol{R}$ and $\boldsymbol{U}$, we assume
that $\boldsymbol{U}\left(\boldsymbol{K}\left(\boldsymbol{\theta}\right),\boldsymbol{\theta}\right)=\boldsymbol{0}$
and therefore $\boldsymbol{R}\left(\boldsymbol{0},\boldsymbol{\theta}\right)=\boldsymbol{0}$.
An invariant foliation is defined for a set of invariant vector bundles
about the torus $\mathcal{T}$. The following theorem lays down the
conditions when such foliation exists and unique.
\begin{thm}
\label{thm:exists-unique}Assume an invariant torus (\ref{eq:TOR-geometry})
and a linearised system about the torus in the form of (\ref{eq:ED-linsys}).
Also assume that the linear system has dichotomy spectral intervals
given by (\ref{eq:ED-spectrum-A}) or (\ref{eq:ED-spectrum-B}) and
that $\beta_{m}<1$. Pick one or more spectral intervals using a non-empty
index set 
\[
\mathcal{I}\subset\left\{ 1,2,\ldots,m\right\} 
\]
with the condition that $\alpha_{j}\neq0$ for all $j\in\mathcal{I}$
so that linear the system (\ref{eq:ED-linsys}) restricted to the
subset of the spectrum 
\[
\Sigma_{\mathcal{F}}=\bigcup_{j\in\mathcal{I}}\Sigma_{j}
\]
is invertible. Define the spectral quotient as 
\[
\beth_{\mathcal{I}}=\frac{\min_{j\in\mathcal{I}}\log\alpha_{j}}{\log\beta_{m}}.
\]
If the non-resonance conditions 
\begin{equation}
1\notin\left[\beta_{i_{0}}^{-1}\alpha_{i_{1}}\cdots\alpha_{i_{j}},\alpha_{i_{0}}^{-1}\beta_{i_{1}}\cdots\beta_{i_{j}}\right]\label{eq:FOIL-non-resonance-1}
\end{equation}
hold for $i_{0}\in\mathcal{I}$, $i_{1},\ldots,i_{j}\in\left\{ 1,2,\ldots,m\right\} $
such that there exists $i_{k}\notin\mathcal{I}$ and $2\le j<\beth_{\mathcal{I}}+1$
then
\begin{enumerate}
\item there exists a unique invariant foliation defined by analytic functions
$\boldsymbol{U}$ and $\boldsymbol{R}$ satisfying equation (\ref{eq:FOIL-invariance})
in a sufficiently small neighbourhood of the torus $\mathcal{T}$
such that $D_{1}\boldsymbol{U}\left(\boldsymbol{0},\boldsymbol{\theta}\right)=\bigoplus_{j\in\mathcal{I}}\boldsymbol{U}_{j}\left(\boldsymbol{\theta}\right)$;
\item the nonlinear map $\boldsymbol{R}$ is a polynomial, which in its
simplest form contains terms for which the internal non-resonance
conditions (\ref{eq:FOIL-non-resonance-1}) with $i_{0},i_{1},\ldots,i_{j}\in\mathcal{I}$
and $j<\beth_{\mathcal{I}}+1$ does not hold.
\end{enumerate}
\end{thm}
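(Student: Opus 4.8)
### Proof Proposal

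\textbf{Overall strategy.} The plan is to follow the parametrisation method combined with the Kantorovich--Newton theorem, exactly in the spirit of \cite{HaroTori2006}. The unknowns are the analytic map $\boldsymbol{U}:X\times\mathbb{T}^d\to Z$ representing the foliation and the analytic reduced map $\boldsymbol{R}:Z\times\mathbb{T}^d\to Z$, constrained by the normalisations $\boldsymbol{U}(\boldsymbol{0},\boldsymbol{\theta})=\boldsymbol{0}$, $D_1\boldsymbol{U}(\boldsymbol{0},\boldsymbol{\theta})=\bigoplus_{j\in\mathcal{I}}\boldsymbol{U}_j(\boldsymbol{\theta})$, $\boldsymbol{R}(\boldsymbol{0},\boldsymbol{\theta})=\boldsymbol{0}$. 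First I would recast \eqref{eq:FOIL-invariance} as a zero-finding problem $\mathcal{N}(\boldsymbol{U},\boldsymbol{R})=\boldsymbol{U}\circ(\boldsymbol{F},\boldsymbol{\theta}+\boldsymbol{\omega}) - \boldsymbol{R}\circ(\boldsymbol{U},\boldsymbol{\theta}) = \boldsymbol{0}$ on a suitable Banach space of analytic functions on a complex neighbourhood of $\mathcal{T}$ (a polydisc $\times$ a complex strip around $\mathbb{T}^d$), equipped with a weighted sup-norm on Taylor--Fourier coefficients so that composition and the homological operators are bounded.

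\textbf{Step 1: formal solution order by order.} Expand $\boldsymbol{F}$, $\boldsymbol{U}$, $\boldsymbol{R}$ in Taylor series in $\boldsymbol{x}$ (resp.\ $\boldsymbol{z}$) with $\boldsymbol{\theta}$-dependent coefficients. At first order, \eqref{eq:FOIL-left-invariance}-type equations \eqref{eq:ED-left-invariance} are solved by the bundle projections $\boldsymbol{U}_j$, $j\in\mathcal{I}$, and $D_1\boldsymbol{R}(\boldsymbol{0},\boldsymbol{\theta})=\bigoplus_{j\in\mathcal{I}}\boldsymbol{\Lambda}_j(\boldsymbol{\theta})$. At each order $\ell\ge 2$ one obtains a linear cohomological equation of the schematic form $\boldsymbol{U}^{(\ell)}(\boldsymbol{\theta}+\boldsymbol{\omega})\,\bigl(\bigotimes\boldsymbol{A}(\boldsymbol{\theta})\bigr) - \bigl(\bigoplus_{j\in\mathcal{I}}\boldsymbol{\Lambda}_j(\boldsymbol{\theta})\bigr)\,\boldsymbol{U}^{(\ell)}(\boldsymbol{\theta}) = (\text{known from lower orders}) + (\text{contribution of }\boldsymbol{R}^{(\ell)})$. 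Here is where the dichotomy spectrum enters: the operator on the left, acting on the relevant mixed monomial component indexed by a multi-index hitting bundles $i_1,\dots,i_j$, is boundedly invertible on analytic functions precisely when $1$ lies in the resolvent of the associated product cocycle, and by the exponential bounds \eqref{eq:ED-straight-bound}--\eqref{eq:ED-inverse-lower} this translates into the non-resonance gap \eqref{eq:FOIL-non-resonance-1} being satisfied; I would invoke Corollary~\ref{cor:ED-linear-solution} (after the block-diagonalisation of Corollary~\ref{cor:semi-diagonalise}) to solve it uniquely in the analytic category. When the condition $1\in[\beta_{i_0}^{-1}\alpha_{i_1}\cdots,\,\alpha_{i_0}^{-1}\beta_{i_1}\cdots]$ \emph{does} hold for a monomial with all indices inside $\mathcal{I}$, the cohomological operator is not invertible, and I put that monomial into $\boldsymbol{R}^{(\ell)}$ (the resonant normal-form terms), which is forced to be a polynomial because the spectral quotient $\beth_{\mathcal{I}}$ bounds the order $j$ at which any such internal resonance can occur; for monomials with some index outside $\mathcal{I}$ the hypothesis \eqref{eq:FOIL-non-resonance-1} guarantees invertibility so those terms land in $\boldsymbol{U}$. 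This yields a unique formal power series solution $(\boldsymbol{U}_{\le N},\boldsymbol{R})$ up to any order $N$ and fixes $\boldsymbol{R}$ as a polynomial.

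\textbf{Step 2: the Newton correction and convergence.} Take the formal truncation to sufficiently high order $N$ as an approximate solution $(\boldsymbol{U}_{\le N},\boldsymbol{R})$ and estimate the residual $\mathcal{N}(\boldsymbol{U}_{\le N},\boldsymbol{R})$, which is $O(\|\boldsymbol{x}\|^{N+1})$ on a small polydisc, hence small in the weighted analytic norm after shrinking the domain. The linearisation $D\mathcal{N}$ at the approximate solution is, to leading order, the same cohomological operator as in Step~1 on the non-resonant complement, together with the identity on the $\boldsymbol{R}$-component; by the same dichotomy/non-resonance argument it has a bounded right inverse $\boldsymbol{\Gamma}$ on the relevant subspace, with a norm controlled uniformly in the domain size (the spectral gaps are fixed). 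With boundedness of $\boldsymbol{\Gamma}$, smallness of the residual, and a Lipschitz bound on $D\mathcal{N}$ (from analyticity and Cauchy estimates on the polydisc), the Kantorovich--Newton theorem from the appendix applies and produces a genuine analytic solution $(\boldsymbol{U},\boldsymbol{R})$ in a sufficiently small neighbourhood of $\mathcal{T}$, together with its uniqueness among analytic solutions satisfying the stated normalisation $D_1\boldsymbol{U}(\boldsymbol{0},\boldsymbol{\theta})=\bigoplus_{j\in\mathcal{I}}\boldsymbol{U}_j(\boldsymbol{\theta})$.

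\textbf{Main obstacle.} The delicate point is verifying that the cohomological/homological operator governing each mixed-monomial block is boundedly invertible on the space of analytic functions \emph{uniformly} across all the (finitely many) monomials that remain in $\boldsymbol{U}$, and that this is equivalent to the non-resonance intervals in \eqref{eq:FOIL-non-resonance-1}. This requires carefully identifying, for a monomial of degree $j$ distributed over bundles $i_1,\dots,i_j$ with output in bundle $i_0\in\mathcal{I}$, the correct product cocycle $\boldsymbol{\Lambda}_{i_0}^{-1}\otimes(\boldsymbol{\Lambda}_{i_1}\otimes\cdots\otimes\boldsymbol{\Lambda}_{i_j})$, bounding its dichotomy spectrum from the one-step bounds \eqref{eq:ED-straight-bound}--\eqref{eq:ED-inverse-lower}, and checking $1$ lies in the resolvent iff the closed interval in \eqref{eq:FOIL-non-resonance-1} excludes $1$; the assumption $\alpha_j\neq0$ for $j\in\mathcal{I}$ is exactly what makes $\boldsymbol{\Lambda}_{i_0}^{-1}$ available, and $\beta_m<1$ together with the definition of $\beth_{\mathcal{I}}$ is what makes the resonant set finite so that only finitely many such operators (beyond a fixed order they are all automatically non-resonant) need to be inverted, keeping the inverse norm uniformly bounded. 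A secondary technical nuisance is propagating analyticity in $\boldsymbol{\theta}$ through the small-divisor-free cohomological equations — but since the base dynamics is a pure rotation with no denominators (the solvability here is of dichotomy type, not Diophantine type), this is handled directly by the contraction argument of Corollary~\ref{cor:ED-linear-solution} on a fixed complex strip.
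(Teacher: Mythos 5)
Your skeleton (order-by-order formal solution with resonant monomials pushed into $\boldsymbol{R}$, then a Newton--Kantorovich step for the tail on a small neighbourhood) is the same as the paper's, and your Step 1 — including the derivation of (\ref{eq:FOIL-non-resonance-1}) from the dichotomy bounds and the observation that internal resonances can only occur for $j\le\beth_{\mathcal{I}}$ — is essentially the paper's first step. The genuine gap is in your Step 2, at exactly the point you flag as the ``main obstacle''. You claim the linearisation has a bounded right inverse ``by the same dichotomy/non-resonance argument'' because ``only finitely many such operators \ldots need to be inverted, keeping the inverse norm uniformly bounded''. That is not correct as stated: the correction $\boldsymbol{U}^{>}$ carries infinitely many Taylor orders, and although every order $j>\beth_{\mathcal{I}}$ is automatically non-resonant, the order-by-order inverse bounds degrade like $C_{i_0}C_{i_1}\cdots C_{i_j}$, so uniform invertibility of the tail operator does not follow from the finite list of resonance checks; it needs its own argument. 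The paper does not invert the tail order by order at all: it applies $\boldsymbol{R}^{-1}$ to the invariance equation, works in the Banach space $X_{\sigma,\rho}$ with the weighted norm $\left\Vert \boldsymbol{\Delta}\right\Vert _{\sigma,\rho}=\sup\left|\boldsymbol{x}\right|^{-\sigma}\left|\boldsymbol{\Delta}\left(\boldsymbol{x},\boldsymbol{\theta}\right)\right|$, and shows that the composition operator $\mathcal{\overline{H}}_{0}$ satisfies $\Vert\mathcal{\overline{H}}_{0}^{k}\Vert\le C_{m}^{-\sigma}\beta_{m}^{k\sigma}\left(\min_{i\in\mathcal{I}}\alpha_{i}\right)^{-k}$, which is a contraction precisely when $\sigma>\beth_{\mathcal{I}}$, using $\beta_{m}<1$; no non-resonance condition is needed for the tail, only the vanishing order of the remainder relative to the spectral quotient. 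Your proposal never introduces a norm weighted by the vanishing order (your weights are on Taylor--Fourier coefficients for boundedness of composition, which is a different matter), so the invertibility of $D\mathcal{N}$ — the hinge of the whole Kantorovich step — is asserted rather than proved, and the stated justification would fail.

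A second omission concerns uniqueness. Kantorovich gives you a unique zero in a small ball around the approximate solution, for the particular $\boldsymbol{R}$ you fixed; but the pair $\left(\boldsymbol{U},\boldsymbol{R}\right)$ is not unique — at internal monomials one may legitimately distribute terms between $\boldsymbol{U}$ and $\boldsymbol{R}$ in different ways, all compatible with the normalisation $D_{1}\boldsymbol{U}\left(\boldsymbol{0},\boldsymbol{\theta}\right)=\bigoplus_{j\in\mathcal{I}}\boldsymbol{U}_{j}\left(\boldsymbol{\theta}\right)$ — so local uniqueness in function space does not yield the uniqueness of the invariant foliation claimed in part 1. The paper closes this with a third step you do not address: any two permissible conjugate maps $\boldsymbol{R},\hat{\boldsymbol{R}}$ are related by a transformation $\boldsymbol{T}$ solving (\ref{eq:FOIL-unique-tr}), which has the same non-resonance structure as (\ref{eq:FOIL-invariance}); then $\hat{\boldsymbol{U}}=\boldsymbol{T}\circ\boldsymbol{U}$ is the unique encoder for $\hat{\boldsymbol{R}}$, so all admissible solutions define the same foliation as a geometric object. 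Without this (or an equivalent) argument, your proof establishes existence plus uniqueness only relative to one fixed normal-form choice, which is weaker than the theorem.
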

\begin{proof}
The proof has three steps. First we find a polynomial approximation
of the invariant foliation which takes into account all possible resonance
conditions. Second, we calculate an exact and unique correction to
the polynomial expansion of the foliation. Finally, we establish that
despite our choice of terms in the polynomial expansion, the calculated
foliation is unique as a geometric object.

To make calculations simpler, we first shift the torus $\mathcal{M}$
to the origin, such that the new map is $\hat{\boldsymbol{F}}\left(\boldsymbol{x},\boldsymbol{\theta}\right)=\boldsymbol{F}\left(\boldsymbol{K}\left(\boldsymbol{\theta}\right)+\boldsymbol{x},\boldsymbol{\theta}\right)$
and drop the hat from $\hat{\boldsymbol{F}}$. This normalises our
system such that $\boldsymbol{F}\left(\boldsymbol{0},\boldsymbol{\theta}\right)=\boldsymbol{0}$.
In addition, we choose $\boldsymbol{U}\left(\boldsymbol{0},\boldsymbol{\theta}\right)=\boldsymbol{0}$,
and then it follows from (\ref{eq:FOIL-invariance}) that $\boldsymbol{R}\left(\boldsymbol{0},\boldsymbol{\theta}\right)=\boldsymbol{0}$.

Continuing with our plan, we expand the two unknowns of the invariance
equation (\ref{eq:FOIL-invariance}) in the form of
\begin{align}
\boldsymbol{R}\left(\boldsymbol{z},\boldsymbol{\theta}\right) & =\sum_{j=1}^{\sigma}\boldsymbol{R}^{j}\left(\boldsymbol{\theta}\right)\boldsymbol{z}^{\otimes j},\label{eq:R-def-1-1}\\
\boldsymbol{U}\left(\boldsymbol{x},\boldsymbol{\theta}\right) & =\sum_{j=1}^{\sigma}\boldsymbol{U}^{j}\left(\boldsymbol{\theta}\right)\boldsymbol{x}^{\otimes j}+\boldsymbol{U}^{>}\left(\boldsymbol{x},\boldsymbol{\theta}\right),\label{eq:W-def-1-1}
\end{align}
where $\sigma$ is a polynomial order that will be clarified later.
In the first step of the solution of (\ref{eq:FOIL-invariance}),
we solve for the polynomial coefficients $\boldsymbol{R}^{j}$ and
$\boldsymbol{U}^{j}$. In the second step, we solve for the correction
term $\boldsymbol{U}^{>}$. In our case, the nonlinear map $\boldsymbol{R}$
will remain a polynomial. 

The linear terms $\boldsymbol{R}^{1}$, $\boldsymbol{U}^{1}$ are
recovered from the linearisation of the invariance equation (\ref{eq:FOIL-invariance})
about the origin, which is
\begin{align}
\boldsymbol{R}^{1}\left(\boldsymbol{\theta}\right)\boldsymbol{U}^{1}\left(\boldsymbol{\theta}\right) & =\boldsymbol{U}^{1}\left(\boldsymbol{\theta}+\boldsymbol{\omega}\right)\boldsymbol{A}\left(\boldsymbol{\theta}\right),\label{eq:FOIL-inv-linear}
\end{align}
where $\boldsymbol{A}\left(\boldsymbol{\theta}\right)=D_{1}\boldsymbol{F}\left(\boldsymbol{0},\boldsymbol{\theta}\right)$.
The matrices $\boldsymbol{R}^{1}$ and $\boldsymbol{U}^{1}$ can be
found from the dichotomy spectral decomposition of matrix $\boldsymbol{A}$
as per corollary \ref{cor:semi-diagonalise}. Noticing that the linearised
invariance equation (\ref{eq:FOIL-inv-linear}) is the same as (\ref{eq:ED-left-invariance}),
we find that
\[
\boldsymbol{R}^{1}\left(\boldsymbol{\theta}\right)=\begin{pmatrix}\boldsymbol{\Lambda}_{i_{1}}\left(\boldsymbol{\theta}\right) &  & \boldsymbol{0}\\
 & \ddots\\
\boldsymbol{0} &  & \boldsymbol{\Lambda}_{i_{\#\mathcal{I}}}\left(\boldsymbol{\theta}\right)
\end{pmatrix},\quad\boldsymbol{U}^{1}\left(\boldsymbol{\theta}\right)=\begin{pmatrix}\boldsymbol{U}_{i_{1}}\left(\boldsymbol{\theta}\right)\\
\vdots\\
\boldsymbol{U}_{i_{\#\mathcal{I}}}\left(\boldsymbol{\theta}\right)
\end{pmatrix},\quad\mathcal{\mathcal{I}}=\left\{ i_{1},i_{2},\ldots,i_{\#\mathcal{I}}\right\} .
\]
We solve for the nonlinear terms in increasing order, starting with
$j=2$. We can do this procedure, because terms of order $j$ only
depend on unknown terms of order $j$ or lower. The equation for the
order-$j$ terms is given by
\begin{equation}
\boldsymbol{R}^{1}\left(\boldsymbol{\theta}\right)\boldsymbol{U}^{j}\left(\boldsymbol{\theta}\right)\boldsymbol{x}^{\otimes j}+\boldsymbol{R}^{j}\left(\boldsymbol{\theta}\right)\left(\boldsymbol{U}^{1}\left(\boldsymbol{\theta}\right)\boldsymbol{x}\right)^{\otimes j}-\boldsymbol{U}^{j}\left(\boldsymbol{\theta}+\boldsymbol{\omega}\right)\left(\boldsymbol{A}\left(\boldsymbol{\theta}\right)\boldsymbol{x}\right)^{\otimes j}=\boldsymbol{\Gamma}^{j}\left(\boldsymbol{\theta}\right)\boldsymbol{x}^{\otimes j},\label{eq:FOIL-homological}
\end{equation}
where $\boldsymbol{\Gamma}^{j}$ is composed of polynomial coefficients
of $\boldsymbol{R}$ and $\boldsymbol{U}$ of order less than $j$,
and terms of $\boldsymbol{F}$. Equation (\ref{eq:FOIL-homological})
is linear for the polynomial coefficients $\boldsymbol{R}^{j}$ and
$\boldsymbol{U}^{j}$. Equation (\ref{eq:FOIL-homological}) can also
be split up into independent lower dimensional equations on invariant
subspaces, that are created from the invariant subspaces of $\boldsymbol{A}$.
For the vector space $Z$, we need to introduce the decomposition
such that $\boldsymbol{V}_{i}^{T}\boldsymbol{R}^{1}\left(\boldsymbol{\theta}\right)\boldsymbol{V}_{i}=\boldsymbol{\Lambda}_{i}\left(\boldsymbol{\theta}\right)$,
the identity is $\boldsymbol{I}=\begin{pmatrix}\boldsymbol{V}_{i_{1}} & \cdots & \boldsymbol{V}_{i_{\#\mathcal{I}}}\end{pmatrix}$
and $\boldsymbol{V}_{p}^{T}\boldsymbol{V}_{q}=\boldsymbol{I}\delta_{pq}$,
where $\delta_{pq}$ is the Kronecker delta. We also decompose $\boldsymbol{U}^{j}$
using $\boldsymbol{V}_{i}$ in its range and $\boldsymbol{U}_{i}$
in its domain. This further decomposition leads to the expressions
\begin{align*}
\boldsymbol{R}^{j}\left(\boldsymbol{\theta}\right)\boldsymbol{z}^{\otimes j} & =\sum_{i_{0}\cdots i_{j}}\boldsymbol{V}_{i_{0}}\boldsymbol{R}_{i_{0}i_{1}\cdots i_{j}}^{j}\left(\boldsymbol{\theta}\right)\left[\boldsymbol{V}_{i_{1}}^{T}\boldsymbol{z}\otimes\cdots\otimes\boldsymbol{V}_{i_{j}}^{T}\boldsymbol{z}\right],\\
\boldsymbol{U}^{j}\left(\boldsymbol{\theta}\right)\boldsymbol{x}^{\otimes j} & =\sum_{i_{0}\cdots i_{j}}\boldsymbol{V}_{i_{0}}\boldsymbol{U}_{i_{0}i_{1}\cdots i_{j}}^{j}\left(\boldsymbol{\theta}\right)\left[\boldsymbol{U}_{i_{1}}\left(\boldsymbol{\theta}\right)\boldsymbol{x}\otimes\cdots\otimes\boldsymbol{U}_{i_{j}}\left(\boldsymbol{\theta}\right)\boldsymbol{x}\right],
\end{align*}
where the indices are $i_{0}\in\mathcal{I}$ and $i_{1},i_{2},\ldots,i_{j}\in\left\{ 1,2,\ldots,m\right\} $
for $\boldsymbol{U}$ and $i_{1},i_{2},\ldots,i_{j}\in\mathcal{I}$
for $\boldsymbol{R}$. Substituting $\boldsymbol{x}^{\otimes j}=\boldsymbol{W}_{i_{1}}\left(\boldsymbol{\theta}\right)\otimes\cdots\otimes\boldsymbol{W}_{i_{j}}\left(\boldsymbol{\theta}\right)$
into (\ref{eq:FOIL-homological}) and multiplying from the left by
$\boldsymbol{V}_{i_{0}}^{T}$ and finally noting that $\boldsymbol{U}_{p}\left(\boldsymbol{\theta}\right)\boldsymbol{W}_{q}\left(\boldsymbol{\theta}\right)=\boldsymbol{I}\delta_{pq}$
we find that 
\begin{align}
\boldsymbol{\Lambda}_{i_{0}}\left(\boldsymbol{\theta}\right)\boldsymbol{U}_{i_{0}i_{1}\cdots i_{j}}^{j}\left(\boldsymbol{\theta}\right)+\boldsymbol{R}_{i_{0}i_{1}\cdots i_{j}}^{j}\left(\boldsymbol{\theta}\right)-\boldsymbol{U}_{i_{0}i_{1}\cdots i_{j}}^{j}\left(\boldsymbol{\theta}+\boldsymbol{\omega}\right)\left[\boldsymbol{\Lambda}_{i_{1}}\left(\boldsymbol{\theta}\right)\otimes\cdots\otimes\boldsymbol{\Lambda}_{i_{j}}\left(\boldsymbol{\theta}\right)\right] & =\boldsymbol{\Gamma}_{i_{0}i_{1}\cdots i_{j}}^{j}\left(\boldsymbol{\theta}\right)\;\text{for}\;i_{0},i_{1},\ldots,i_{j}\in\mathcal{I},\label{eq:FOIL-poly-decomp-int}\\
\boldsymbol{\Lambda}_{i_{0}}\left(\boldsymbol{\theta}\right)\boldsymbol{U}_{i_{0}i_{1}\cdots i_{j}}^{j}\left(\boldsymbol{\theta}\right)-\boldsymbol{U}_{i_{0}i_{1}\cdots i_{j}}^{j}\left(\boldsymbol{\theta}+\boldsymbol{\omega}\right)\left[\boldsymbol{\Lambda}_{i_{1}}\left(\boldsymbol{\theta}\right)\otimes\cdots\otimes\boldsymbol{\Lambda}_{i_{j}}\left(\boldsymbol{\theta}\right)\right] & =\boldsymbol{\Gamma}_{i_{0}i_{1}\cdots i_{j}}^{j}\left(\boldsymbol{\theta}\right)\;\text{if}\;\exists i_{k},\;\text{s.t.}\;i_{k}\notin\mathcal{I},\label{eq:FOIL-poly-decomp-ext}
\end{align}
where
\[
\boldsymbol{\Gamma}_{i_{0}i_{1}\cdots i_{j}}^{j}\left(\boldsymbol{\theta}\right)=\boldsymbol{\Gamma}^{j}\left(\boldsymbol{\theta}\right)\boldsymbol{W}_{i_{1}}\left(\boldsymbol{\theta}\right)\otimes\cdots\otimes\boldsymbol{W}_{i_{j}}\left(\boldsymbol{\theta}\right).
\]
The only difference between (\ref{eq:FOIL-poly-decomp-int}) and (\ref{eq:FOIL-poly-decomp-ext})
is the term $\boldsymbol{R}_{i_{0}i_{1}\cdots i_{j}}^{j}$, and therefore
we treat the two equations together. For now, we set $\boldsymbol{R}_{i_{0}i_{1}\cdots i_{j}}^{j}=\boldsymbol{0}$
and solve for $\boldsymbol{U}_{i_{0}i_{1}\cdots i_{j}}^{j}$ in both
equations as long as it is possible.

To bring (\ref{eq:FOIL-poly-decomp-ext}) into a more convenient
form, we multiply (\ref{eq:FOIL-poly-decomp-ext}) by $\boldsymbol{\Lambda}_{i_{0}}^{-1}\left(\boldsymbol{\theta}\right)$
from the left and find
\begin{equation}
\boldsymbol{U}_{i_{0}i_{1}\cdots i_{j}}^{j}\left(\boldsymbol{\theta}\right)-\boldsymbol{\Lambda}_{i_{0}}^{-1}\left(\boldsymbol{\theta}\right)\boldsymbol{U}_{i_{0}i_{1}\cdots i_{j}}^{j}\left(\boldsymbol{\theta}+\boldsymbol{\omega}\right)\left[\boldsymbol{\Lambda}_{i_{1}}\left(\boldsymbol{\theta}\right)\otimes\cdots\otimes\boldsymbol{\Lambda}_{i_{j}}\left(\boldsymbol{\theta}\right)\right]=\boldsymbol{\Lambda}_{i_{0}}^{-1}\left(\boldsymbol{\theta}\right)\boldsymbol{\Gamma}_{i_{0}i_{1}\cdots i_{j}}^{j}\left(\boldsymbol{\theta}\right).\label{eq:FOIL-component-homological}
\end{equation}
Equation (\ref{eq:FOIL-component-homological}) is still in a cumbersome
form, therefore we introduce the vectorisation $\vec{\boldsymbol{u}}=\mathrm{vec}\left(\boldsymbol{U}_{i_{0}i_{1}\cdots i_{j}}^{j}\right)$
and define operator $\boldsymbol{B}$ and its inverse by 
\begin{align*}
\boldsymbol{B}\left(\boldsymbol{\theta}\right)\vec{\boldsymbol{u}}\left(\boldsymbol{\theta}\right) & =\mathrm{vec}\left(\boldsymbol{\Lambda}_{i_{0}}^{-1}\left(\boldsymbol{\theta}\right)\boldsymbol{U}_{i_{0}i_{1}\cdots i_{j}}^{j}\left(\boldsymbol{\theta}\right)\left[\boldsymbol{\Lambda}_{i_{1}}\left(\boldsymbol{\theta}\right)\otimes\cdots\otimes\boldsymbol{\Lambda}_{i_{j}}\left(\boldsymbol{\theta}\right)\right]\right),\\
\boldsymbol{B}^{-1}\left(\boldsymbol{\theta}\right)\vec{\boldsymbol{u}}\left(\boldsymbol{\theta}\right) & =\mathrm{vec}\left(\boldsymbol{\Lambda}_{i_{0}}\left(\boldsymbol{\theta}\right)\boldsymbol{U}_{i_{0}i_{1}\cdots i_{j}}^{j}\left(\boldsymbol{\theta}\right)\left[\boldsymbol{\Lambda}_{i_{1}}^{-1}\left(\boldsymbol{\theta}\right)\otimes\cdots\otimes\boldsymbol{\Lambda}_{i_{j}}^{-1}\left(\boldsymbol{\theta}\right)\right]\right).
\end{align*}
Equation (\ref{eq:FOIL-component-homological}) is now in a simpler
form
\begin{equation}
\vec{\boldsymbol{u}}\left(\boldsymbol{\theta}\right)-\boldsymbol{B}\left(\boldsymbol{\theta}\right)\vec{\boldsymbol{u}}\left(\boldsymbol{\theta}+\boldsymbol{\omega}\right)=\boldsymbol{\eta}\left(\boldsymbol{\theta}\right),\label{eq:FOIL-hom-fw-iter}
\end{equation}
which can be solved by finding the limit of either the forward iteration
\begin{equation}
\vec{\boldsymbol{u}}^{\left(k\right)}\left(\boldsymbol{\theta}\right)=\boldsymbol{B}\left(\boldsymbol{\theta}\right)\vec{\boldsymbol{u}}^{\left(k-1\right)}\left(\boldsymbol{\theta}+\boldsymbol{\omega}\right)+\boldsymbol{\eta}\left(\boldsymbol{\theta}\right)\label{eq:FOIL-hom-fw-iter-id}
\end{equation}
or the inverse iteration
\begin{equation}
\vec{\boldsymbol{u}}^{\left(k\right)}\left(\boldsymbol{\theta}\right)=\boldsymbol{B}^{-1}\left(\boldsymbol{\theta}-\boldsymbol{\omega}\right)\vec{\boldsymbol{u}}^{\left(k-1\right)}\left(\boldsymbol{\theta}-\boldsymbol{\omega}\right)-\boldsymbol{B}^{-1}\left(\boldsymbol{\theta}-\boldsymbol{\omega}\right)\boldsymbol{\eta}\left(\boldsymbol{\theta}-\boldsymbol{\omega}\right).\label{eq:FOIL-hom-bw-iter-id}
\end{equation}
Expanding the iterations (\ref{eq:FOIL-hom-fw-iter-id}) and (\ref{eq:FOIL-hom-bw-iter-id})
yields 
\begin{align}
\vec{\boldsymbol{u}}^{\left(k\right)}\left(\boldsymbol{\theta}\right) & =\boldsymbol{B}\left(\boldsymbol{\theta}\right)\cdots\boldsymbol{B}\left(\boldsymbol{\theta}+\left(k-1\right)\boldsymbol{\omega}\right)\vec{\boldsymbol{u}}^{\left(1\right)}\left(\boldsymbol{\theta}+k\boldsymbol{\omega}\right)+\boldsymbol{B}\left(\boldsymbol{\theta}\right)\cdots\boldsymbol{B}\left(\boldsymbol{\theta}+\left(k-2\right)\boldsymbol{\omega}\right)\boldsymbol{\eta}\left(\boldsymbol{\theta}+\left(k-1\right)\boldsymbol{\omega}\right)+\cdots\nonumber \\
 & \qquad\cdots+\boldsymbol{B}\left(\boldsymbol{\theta}\right)\boldsymbol{\eta}\left(\boldsymbol{\theta}+\boldsymbol{\omega}\right)+\boldsymbol{\eta}\left(\boldsymbol{\theta}\right),\;\text{or}\label{eq:FOIL-hom-fw-iter-exp}\\
\vec{\boldsymbol{u}}^{\left(k\right)}\left(\boldsymbol{\theta}\right) & =\boldsymbol{B}^{-1}\left(\boldsymbol{\theta}-\boldsymbol{\omega}\right)\cdots\boldsymbol{B}^{-1}\left(\boldsymbol{\theta}-k\boldsymbol{\omega}\right)\vec{\boldsymbol{u}}^{\left(1\right)}\left(\boldsymbol{\theta}-k\boldsymbol{\omega}\right)-\boldsymbol{B}^{-1}\left(\boldsymbol{\theta}-\boldsymbol{\omega}\right)\cdots\boldsymbol{B}^{-1}\left(\boldsymbol{\theta}-k\boldsymbol{\omega}\right)\boldsymbol{\eta}\left(\boldsymbol{\theta}-k\boldsymbol{\omega}\right)-\cdots\nonumber \\
 & \qquad\cdots-\boldsymbol{B}^{-1}\left(\boldsymbol{\theta}-\boldsymbol{\omega}\right)\boldsymbol{\eta}\left(\boldsymbol{\theta}-\boldsymbol{\omega}\right).\label{eq:FOIL-hom-bw-iter-exp}
\end{align}
First we find out the condition for convergence of (\ref{eq:FOIL-hom-fw-iter-exp})
based on the exponential dichotomies of $\boldsymbol{\Lambda}_{i}$
and that requires us to calculate the bounds on the term 
\begin{multline}
\boldsymbol{B}\left(\boldsymbol{\theta}\right)\cdots\boldsymbol{B}\left(\boldsymbol{\theta}+\left(k-1\right)\boldsymbol{\omega}\right)\vec{\boldsymbol{u}}\left(\boldsymbol{\theta}\right)=\mathrm{vec}\Bigg(\boldsymbol{\Lambda}_{i_{0}}^{-1}\left(\boldsymbol{\theta}\right)\cdots\boldsymbol{\Lambda}_{i_{0}}^{-1}\left(\boldsymbol{\theta}+\left(k-1\right)\boldsymbol{\omega}\right)\boldsymbol{U}_{i_{0}i_{1}\cdots i_{j}}^{j}\left(\boldsymbol{\theta}\right)\times\\
\times\left[\boldsymbol{\Lambda}_{i_{1}}\left(\boldsymbol{\theta}+\left(k-1\right)\boldsymbol{\omega}\right)\cdots\boldsymbol{\Lambda}_{i_{1}}\left(\boldsymbol{\theta}\right)\otimes\cdots\otimes\boldsymbol{\Lambda}_{i_{j}}\left(\boldsymbol{\theta}+\left(k-1\right)\boldsymbol{\omega}\right)\cdots\boldsymbol{\Lambda}_{i_{j}}\left(\boldsymbol{\theta}\right)\right]\Bigg).\label{eq:FOIL-B-fw-iter}
\end{multline}
Using the properties of tensor products and the exponential bounds
(\ref{eq:ED-straight-bound}) and (\ref{eq:ED-inverse-bound}) we
find that 
\[
\left|\boldsymbol{B}\left(\boldsymbol{\theta}\right)\cdots\boldsymbol{B}\left(\boldsymbol{\theta}+\left(k-1\right)\boldsymbol{\omega}\right)\right|\le C_{i_{0}}C_{i_{1}}\cdots C_{i_{j}}\tilde{\alpha}_{i_{0}}^{-k}\tilde{\beta}_{i_{1}}^{k}\cdots\tilde{\beta}_{i_{j}}^{k}=C_{i_{0}}C_{i_{1}}\cdots C_{i_{j}}\left(\tilde{\alpha}_{i_{0}}^{-1}\tilde{\beta}_{i_{1}}\cdots\tilde{\beta}_{i_{j}}\right)^{k},
\]
for $\tilde{\alpha}_{i_{0}}<\alpha_{i_{0}}$, $\tilde{\beta}_{i_{k}}>\beta_{i_{k}}$,
where $k=1\ldots j$. For the inverse iteration we investigate the
term 
\begin{multline*}
\boldsymbol{B}^{-1}\left(\boldsymbol{\theta}-\boldsymbol{\omega}\right)\cdots\boldsymbol{B}^{-1}\left(\boldsymbol{\theta}-k\boldsymbol{\omega}\right)\vec{\boldsymbol{u}}\left(\boldsymbol{\theta}-k\boldsymbol{\omega}\right)=\mathrm{vec}\Bigg(\boldsymbol{\Lambda}_{i_{0}}\left(\boldsymbol{\theta}-\boldsymbol{\omega}\right)\cdots\boldsymbol{\Lambda}_{i_{0}}\left(\boldsymbol{\theta}-k\boldsymbol{\omega}\right)\boldsymbol{U}_{i_{0}i_{1}\cdots i_{j}}^{j}\left(\boldsymbol{\theta}-k\boldsymbol{\omega}\right)\times,\\
\times\left[\boldsymbol{\Lambda}_{i_{1}}^{-1}\left(\boldsymbol{\theta}-k\boldsymbol{\omega}\right)\cdots\boldsymbol{\Lambda}_{i_{1}}^{-1}\left(\boldsymbol{\theta}-\boldsymbol{\omega}\right)\otimes\cdots\otimes\boldsymbol{\Lambda}_{i_{j}}^{-1}\left(\boldsymbol{\theta}-k\boldsymbol{\omega}\right)\cdots\boldsymbol{\Lambda}_{i_{j}}^{-1}\left(\boldsymbol{\theta}-\boldsymbol{\omega}\right)\right]\Bigg),
\end{multline*}
whose bound can be found in the same ways from equations (\ref{eq:ED-straight-bound})
and (\ref{eq:ED-inverse-bound}) as 
\[
\left|\boldsymbol{B}^{-1}\left(\boldsymbol{\theta}-\boldsymbol{\omega}\right)\cdots\boldsymbol{B}^{-1}\left(\boldsymbol{\theta}-k\boldsymbol{\omega}\right)\right|\le C_{i_{0}}C_{i_{1}}\cdots C_{i_{j}}\tilde{\beta}_{i_{0}}^{k}\tilde{\alpha}_{i_{1}}^{-k}\cdots\tilde{\alpha}_{i_{j}}^{-k}=C_{i_{0}}C_{i_{1}}\cdots C_{i_{j}}\left(\tilde{\beta}_{i_{0}}^{-1}\tilde{\alpha}_{i_{1}}\cdots\tilde{\alpha}_{i_{j}}\right)^{-k},
\]
where $\tilde{\beta}_{i_{0}}>\beta_{i_{0}}$, $\tilde{\alpha}_{i_{k}}<\alpha_{i_{k}}$
for $k=1\ldots j$. This means that either of the series converges
to a unique analytic function if $\beta_{i_{0}}^{-1}\alpha_{i_{1}}\cdots\alpha_{i_{j}}>1$
or $\alpha_{i_{0}}^{-1}\beta_{i_{1}}\cdots\beta_{i_{j}}<1$. In summary,
we can solve equation (\ref{eq:FOIL-component-homological}) if $1\notin\left[\beta_{i_{0}}^{-1}\alpha_{i_{1}}\cdots\alpha_{i_{j}},\alpha_{i_{0}}^{-1}\beta_{i_{1}}\cdots\beta_{i_{j}},\right]$,
which is called our non-resonance condition (\ref{eq:FOIL-non-resonance-1}).

We now discuss the case of internal resonances, i.e., when (\ref{eq:FOIL-poly-decomp-int})
cannot be solved with $\boldsymbol{R}_{i_{0}i_{1}\cdots i_{j}}^{j}=\boldsymbol{0}$,
because $1\in\left[\beta_{i_{0}}^{-1}\alpha_{i_{1}}\cdots\alpha_{i_{j}},\alpha_{i_{0}}^{-1}\beta_{i_{1}}\cdots\beta_{i_{j}}\right]$
for an index set $i_{0},i_{1},\ldots,i_{j}\in\mathcal{I}$. In this
case we set $\boldsymbol{U}_{i_{0}i_{1}\cdots i_{j}}^{j}=\boldsymbol{0}$
and $\boldsymbol{R}_{i_{0}i_{1}\cdots i_{j}}^{j}=\boldsymbol{\Gamma}_{i_{0}i_{1}\cdots i_{j}}^{j}$
in (\ref{eq:FOIL-poly-decomp-int}), which is a solution of (\ref{eq:FOIL-poly-decomp-int}).

We note that no resonances are possible if either a) $\beta_{i_{0}}^{-1}\alpha_{i_{1}}\cdots\alpha_{i_{j}}>1$
or b) $\alpha_{i_{0}}^{-1}\beta_{i_{1}}\cdots\beta_{i_{j}}<1$. In
case a) we must have
\[
j\min\log\alpha_{i}-\max_{i\in\mathcal{I}}\log\beta_{i}>0,
\]
which implies the sub-cases
\begin{align}
j & >\frac{\max_{i\in\mathcal{I}}\log\beta_{i}}{\min\log\alpha_{i}}\;\text{if}\;\min\log\alpha_{i}>0,\label{eq:FOIL-sub-a}\\
j & <\frac{\max_{i\in\mathcal{I}}\log\beta_{i}}{\min\log\alpha_{i}}\;\text{if}\;\min\log\alpha_{i}<0.\nonumber 
\end{align}
In case b), we must have
\[
-\min_{i\in\mathcal{I}}\log\alpha_{i_{0}}+j\max\log\beta_{i}<0,
\]
which implies the sub-cases
\begin{align}
j & <\frac{\min_{i\in\mathcal{I}}\log\alpha_{i_{0}}}{\max\log\beta_{i}}\;\text{if}\;\max\log\beta_{i}>0,\nonumber \\
j & >\frac{\min_{i\in\mathcal{I}}\log\alpha_{i_{0}}}{\max\log\beta_{i}}\;\text{if}\;\max\log\beta_{i}<0.\label{eq:FOIL-sub-b}
\end{align}
Only sub-cases (\ref{eq:FOIL-sub-a}) and (\ref{eq:FOIL-sub-b}) make
sense, because we want to have a lower bound on polynomial orders
for which there are no resonances. We also note that (\ref{eq:FOIL-sub-a})
and (\ref{eq:FOIL-sub-b}) are identical if we replace $\boldsymbol{F}$
(and therefore $\boldsymbol{A}$) by its inverse. Therefore we only
focus on the case when the invariant torus is an attractor, that is
(\ref{eq:FOIL-sub-b}). We also notice that (\ref{eq:FOIL-sub-b})
is the same as $j>\beth_{\mathcal{I}}$ stipulated in our theorem.

What remains is to find the correction term $\boldsymbol{U}^{>}$.
We denote the truncated expansion by 
\[
\boldsymbol{U}^{\le}\left(\boldsymbol{x},\boldsymbol{\theta}\right)=\sum_{j=1}^{\sigma}\boldsymbol{U}^{j}\left(\boldsymbol{\theta}\right)\boldsymbol{x}^{\otimes j}
\]
and this way we can write the invariance equation (\ref{eq:FOIL-invariance})
as
\begin{equation}
\boldsymbol{R}\left(\boldsymbol{U}^{\le}\left(\boldsymbol{x},\boldsymbol{\theta}\right)+\boldsymbol{U}^{>}\left(\boldsymbol{x},\boldsymbol{\theta}\right),\boldsymbol{\theta}\right)=\boldsymbol{U}^{\le}\left(\boldsymbol{F}\left(\boldsymbol{x},\boldsymbol{\theta}\right),\boldsymbol{\theta}+\boldsymbol{\omega}\right)+\boldsymbol{U}^{>}\left(\boldsymbol{F}\left(\boldsymbol{x},\boldsymbol{\theta}\right),\boldsymbol{\theta}+\boldsymbol{\omega}\right).\label{eq:FOIL-Ug-pre}
\end{equation}
Applying the inverse $\boldsymbol{R}^{-1}$ to both sides of (\ref{eq:FOIL-Ug-pre}),
yields 
\[
\boldsymbol{U}^{\le}\left(\boldsymbol{x},\boldsymbol{\theta}\right)+\boldsymbol{U}^{>}\left(\boldsymbol{x},\boldsymbol{\theta}\right)=\boldsymbol{R}^{-1}\left(\boldsymbol{U}^{\le}\left(\boldsymbol{F}\left(\boldsymbol{x},\boldsymbol{\theta}\right),\boldsymbol{\theta}+\boldsymbol{\omega}\right)+\boldsymbol{U}^{>}\left(\boldsymbol{F}\left(\boldsymbol{x},\boldsymbol{\theta}\right),\boldsymbol{\theta}+\boldsymbol{\omega}\right),\boldsymbol{\theta}\right),
\]
which translates to finding a root of the operator
\begin{equation}
\left(\mathcal{H}\left(\boldsymbol{U}^{>}\right)\right)\left(\boldsymbol{x},\boldsymbol{\theta}\right)=\boldsymbol{U}^{\le}\left(\boldsymbol{x},\boldsymbol{\theta}\right)+\boldsymbol{U}^{>}\left(\boldsymbol{x},\boldsymbol{\theta}\right)-\boldsymbol{R}^{-1}\left(\boldsymbol{U}^{\le}\left(\boldsymbol{F}\left(\boldsymbol{x},\boldsymbol{\theta}\right),\boldsymbol{\theta}+\boldsymbol{\omega}\right)+\boldsymbol{U}^{>}\left(\boldsymbol{F}\left(\boldsymbol{x},\boldsymbol{\theta}\right),\boldsymbol{\theta}+\boldsymbol{\omega}\right),\boldsymbol{\theta}\right).\label{eq:FOIL-root-of}
\end{equation}
To aid this endeavour, we introduce a new norm on the space of analytic
functions
\[
\left\Vert \boldsymbol{\Delta}\right\Vert _{\sigma,\rho}=\sup_{\left|\boldsymbol{x}\right|\le\rho,\boldsymbol{\theta}\in\mathbb{T}^{d}}\left|\boldsymbol{x}\right|^{-\sigma}\left|\boldsymbol{\Delta}\left(\boldsymbol{x},\boldsymbol{\theta}\right)\right|,
\]
which defines the Banach space 
\[
X_{\sigma,\rho}=\left\{ \boldsymbol{\Delta}\in C^{a}\left(B_{\rho}\left(\boldsymbol{0}\right)\times\mathbb{T}^{d},X\right):\left\Vert \boldsymbol{\boldsymbol{\Delta}}\right\Vert _{\sigma,\rho}<\infty\right\} .
\]

We use theorem \ref{thm:Kantorovich} to establish the existence of
a unique root of $\mathcal{H}$. First we calculate the quantities
\begin{align*}
\mathcal{H}\left(\boldsymbol{0}\right)\left(\boldsymbol{x},\boldsymbol{\theta}\right) & =\boldsymbol{U}^{\le}\left(\boldsymbol{x},\boldsymbol{\theta}\right)-\boldsymbol{R}^{-1}\left(\boldsymbol{U}^{\le}\left(\boldsymbol{F}\left(\boldsymbol{x},\boldsymbol{\theta}\right),\boldsymbol{\theta}+\boldsymbol{\omega}\right),\boldsymbol{\theta}\right),\\
\left(D\mathcal{H}\left(\boldsymbol{U}^{>}\right)\boldsymbol{\Delta}\right)\left(\boldsymbol{x},\boldsymbol{\theta}\right) & =\boldsymbol{\Delta}\left(\boldsymbol{x},\boldsymbol{\theta}\right)-D_{1}\boldsymbol{R}^{-1}\left(\boldsymbol{U}^{\le}\left(\boldsymbol{F}\left(\boldsymbol{x},\boldsymbol{\theta}\right),\boldsymbol{\theta}+\boldsymbol{\omega}\right)+\boldsymbol{U}^{>}\left(\boldsymbol{F}\left(\boldsymbol{x},\boldsymbol{\theta}\right),\boldsymbol{\theta}+\boldsymbol{\omega}\right),\boldsymbol{\theta}\right)\times\\
 & \qquad\times\boldsymbol{\Delta}\left(\boldsymbol{F}\left(\boldsymbol{x},\boldsymbol{\theta}\right),\boldsymbol{\theta}+\boldsymbol{\omega}\right).
\end{align*}
Due to the polynomial approximation of the actual solution by $\boldsymbol{U}^{\le}$,
for all $\epsilon>0$ there is a $\rho$ such that $\left|\mathcal{H}\left(\boldsymbol{0}\right)\right|<\epsilon$.
We also notice that $D\mathcal{H}\left(\boldsymbol{U}^{>}\right)$
is Lipschitz continuous in the operator norm induced by $\left\Vert \cdot\right\Vert _{\sigma,\rho}$
and therefore the Lipschitz constant $\nu$ is finite. We now establish
that 
\[
D\mathcal{H}\left(\boldsymbol{0}\right)\boldsymbol{\Delta}=\boldsymbol{\Delta}-\mathcal{H}_{0}\boldsymbol{\Delta}
\]
is invertible, where
\[
\left(\mathcal{H}_{0}\boldsymbol{\Delta}\right)\left(\boldsymbol{x},\boldsymbol{\theta}\right)=D_{1}\boldsymbol{R}^{-1}\left(\boldsymbol{U}^{\le}\left(\boldsymbol{F}\left(\boldsymbol{x},\boldsymbol{\theta}\right),\boldsymbol{\theta}+\boldsymbol{\omega}\right),\boldsymbol{\theta}\right)\boldsymbol{\Delta}\left(\boldsymbol{F}\left(\boldsymbol{x},\boldsymbol{\theta}\right),\boldsymbol{\theta}+\boldsymbol{\omega}\right).
\]
Again, due to the polynomial approximation, $\mathcal{T}_{0}$ is
arbitrarily close to 
\[
\left(\mathcal{\overline{H}}_{0}\boldsymbol{\Delta}\right)\left(\boldsymbol{x},\boldsymbol{\theta}\right)=D_{1}\boldsymbol{R}^{-1}\left(\boldsymbol{0},\boldsymbol{\theta}\right)\boldsymbol{\Delta}\left(\boldsymbol{A}\left(\boldsymbol{\theta}\right)\boldsymbol{x},\boldsymbol{\theta}+\boldsymbol{\omega}\right)
\]
for sufficiently small $\rho$. Therefore if $\mathcal{\overline{H}}_{0}$
is a contraction, there is a $\rho>0$ such that $\mathcal{H}_{0}$
is a contraction, too. We can only establish whether $\mathcal{\overline{H}}_{0}$
is a contraction after a number of iterations, and therefore we calculate
\[
\left(\mathcal{\overline{H}}_{0}^{k}\boldsymbol{\Delta}\right)\left(\boldsymbol{x},\boldsymbol{\theta}\right)=D_{1}\boldsymbol{R}^{-1}\left(\boldsymbol{0},\boldsymbol{\theta}\right)\cdots D_{1}\boldsymbol{R}^{-1}\left(\boldsymbol{0},\boldsymbol{\theta}+\left(k-1\right)\boldsymbol{\omega}\right)\boldsymbol{\Delta}\left(\boldsymbol{A}\left(\boldsymbol{\theta}\right)\cdots\boldsymbol{A}\left(\boldsymbol{\theta}+\left(k-1\right)\boldsymbol{\omega}\right)\boldsymbol{x},\boldsymbol{\theta}+k\boldsymbol{\omega}\right).
\]
Calculating 
\begin{multline}
\left\Vert \mathcal{\overline{H}}_{0}^{k}\boldsymbol{\Delta}\right\Vert _{\sigma,\rho}=\sup_{\left|\boldsymbol{A}^{-1}\left(\boldsymbol{\theta}+\left(k-1\right)\boldsymbol{\omega}\right)\cdots\boldsymbol{A}^{-1}\left(\boldsymbol{\theta}\right)\boldsymbol{y}\right|\le\rho,\boldsymbol{\theta}\in\mathbb{T}^{d}}\left|\boldsymbol{A}^{-1}\left(\boldsymbol{\theta}+\left(k-1\right)\boldsymbol{\omega}\right)\cdots\boldsymbol{A}^{-1}\left(\boldsymbol{\theta}\right)\boldsymbol{y}\right|^{-\sigma}\times\\
\times\left|D_{1}\boldsymbol{R}^{-1}\left(\boldsymbol{0},\boldsymbol{\theta}\right)\cdots D_{1}\boldsymbol{R}^{-1}\left(\boldsymbol{0},\boldsymbol{\theta}+\left(k-1\right)\boldsymbol{\omega}\right)\boldsymbol{\Delta}\left(\boldsymbol{y},\boldsymbol{\theta}+k\boldsymbol{\omega}\right)\right|\label{eq:FOIL-contr-norm}
\end{multline}
takes a number of steps. Using (\ref{eq:ED-inverse-lower}) we estimate
that 
\[
\left|\boldsymbol{A}^{-1}\left(\boldsymbol{\theta}+\left(k-1\right)\boldsymbol{\omega}\right)\cdots\boldsymbol{A}^{-1}\left(\boldsymbol{\theta}\right)\right|\ge C_{m}\tilde{\beta}^{-k},
\]
for $\tilde{\beta}>\beta_{m}<1$. Given that $C_{m}\beta_{m}^{-k}>1$
for sufficiently large $k$, we concluded that if 
\[
\left|\boldsymbol{A}^{-1}\left(\boldsymbol{\theta}+\left(k-1\right)\boldsymbol{\omega}\right)\cdots\boldsymbol{A}^{-1}\left(\boldsymbol{\theta}\right)\boldsymbol{y}\right|\le\rho,
\]
so is $\left|\boldsymbol{y}\right|\le\rho$. This implies that the
norm (\ref{eq:FOIL-contr-norm}) is bounded by 
\begin{align*}
\left\Vert \mathcal{\overline{H}}_{0}\boldsymbol{\Delta}\right\Vert _{\sigma,\rho} & \le\sup_{\left|\boldsymbol{y}\right|\le\rho,\boldsymbol{\theta}\in\mathbb{T}^{d}}C_{m}^{-\sigma}\beta_{m}^{k\sigma}\left|\boldsymbol{y}\right|^{-\sigma}\left|D_{1}\boldsymbol{R}^{-1}\left(\boldsymbol{0},\boldsymbol{\theta}\right)\cdots D_{1}\boldsymbol{R}^{-1}\left(\boldsymbol{0},\boldsymbol{\theta}+\left(k-1\right)\boldsymbol{\omega}\right)\boldsymbol{\Delta}\left(\boldsymbol{y},\boldsymbol{\theta}+k\boldsymbol{\omega}\right)\right|,\\
 & \le\sup_{\left|\boldsymbol{y}\right|\le\rho,\boldsymbol{\theta}\in\mathbb{T}^{d}}C_{m}^{-\sigma}\beta_{m}^{k\sigma}\left|\boldsymbol{y}\right|^{-\sigma}\left(\min_{i\in\mathcal{I}}\alpha_{i}\right)^{-k}\left|\boldsymbol{\Delta}\left(\boldsymbol{y},\boldsymbol{\theta}+k\boldsymbol{\omega}\right)\right|,\\
 & =\sup_{\left|\boldsymbol{y}\right|\le\rho,\boldsymbol{\theta}\in\mathbb{T}^{d}}C_{m}^{-\sigma}\beta_{m}^{k\sigma}\left|\boldsymbol{y}\right|^{-\sigma}\left(\min_{i\in\mathcal{I}}\alpha_{i}\right)^{-k}\left|\boldsymbol{\Delta}\left(\boldsymbol{y},\boldsymbol{\theta}\right)\right|,\\
 & =C_{m}^{-\sigma}\beta_{m}^{k\sigma}\left(\min_{i\in\mathcal{I}}\alpha_{i}\right)^{-k}\left\Vert \boldsymbol{\Delta}\right\Vert _{\sigma,\rho}.
\end{align*}
Now all we need is that 
\begin{align*}
\beta_{m}^{\sigma}\left(\min_{i\in\mathcal{I}}\alpha_{i}\right)^{-1} & <1\\
\sigma\log\beta_{m}-\min_{i\in\mathcal{I}}\log\alpha_{i} & <0\\
\sigma\log\beta_{m} & <\min_{i\in\mathcal{I}}\log\alpha_{i}\\
\sigma> & \frac{\min_{i\in\mathcal{I}}\log\alpha_{i}}{\log\beta_{m}}=\beth_{\mathcal{I}},
\end{align*}
and then for $k$ sufficiently large that $C_{m}^{-\sigma}\beta_{m}^{k\sigma}\left(\min_{i\in\mathcal{I}}\alpha_{i}\right)^{-k}<1$.
This the implies that $\mathcal{\overline{H}}_{0}$ is a contraction
and so is $\mathcal{H}_{0}$ for $\rho>0$ sufficiently small. This
also means that $\left\Vert D\mathcal{H}\left(\boldsymbol{0}\right)\right\Vert \le\mu<\infty$
and that $\left\Vert D\mathcal{H}\left(\boldsymbol{0}\right)\mathcal{H}\left(\boldsymbol{0}\right)\right\Vert \le\left\Vert D\mathcal{H}\left(\boldsymbol{0}\right)\right\Vert \left\Vert \mathcal{H}\left(\boldsymbol{0}\right)\right\Vert =\mu\epsilon=:\lambda$
is arbitrarily small. As a result we can apply theorem (\ref{thm:Kantorovich})
and find that we have a unique solution for $\boldsymbol{U}^{>}$.

The last thing to show is uniqueness. In our calculation we have shown
that for any permissible choice of $\boldsymbol{R}$, we have a unique
$\boldsymbol{U}$. It is also clear that between two permissible $\boldsymbol{R}$
and $\hat{\boldsymbol{R}}$ there is always a transformation $\boldsymbol{T}$,
such that 
\begin{equation}
\hat{\boldsymbol{R}}\left(\boldsymbol{\theta},\boldsymbol{T}\left(\boldsymbol{\theta},\boldsymbol{z}\right)\right)=\boldsymbol{T}\left(\boldsymbol{\theta}+\boldsymbol{\omega},\boldsymbol{R}\left(\boldsymbol{\theta},\boldsymbol{z}\right)\right),\label{eq:FOIL-unique-tr}
\end{equation}
because equation (\ref{eq:FOIL-unique-tr}) has the exact same non-resonance
conditions and choices for $\hat{\boldsymbol{R}}$ as the invariance
equation (\ref{eq:FOIL-invariance}). Now we notice that $\hat{\boldsymbol{U}}=\boldsymbol{T}\circ\boldsymbol{U}$
together with $\hat{\boldsymbol{R}}$ also satisfies the invariance
equation (\ref{eq:FOIL-invariance}) and that $\hat{\boldsymbol{U}}$
is the unique encoder, given $\hat{\boldsymbol{R}}$. Therefore we
conclude that any two solutions of (\ref{eq:FOIL-invariance}) only
differ by coordinate transformation $\boldsymbol{T}$. This makes
the invariant foliation $\mathcal{F}_{\mathcal{I}}$ a unique geometric
object.
\end{proof}

\section{Numerical implementation}

We use truncated Fourier series to represent functions along the torus
$\mathbb{T}$, so that a function is given by
\[
\boldsymbol{x}\left(\theta\right)=\sum_{j=-\ell}^{\ell}\boldsymbol{x}_{j}\mathrm{e}^{ij\theta},
\]
where $\overline{\boldsymbol{x}}_{j}=\boldsymbol{x}_{-j}$ are the
values we hold in the computer memory. We also need to use the shift
operator 
\[
\left(\mathcal{S}^{\omega}\boldsymbol{x}\right)\left(\theta\right)=\boldsymbol{x}\left(\theta-\omega\right),
\]
which then implies that
\[
\left(\mathcal{S}^{\omega}\boldsymbol{x}\right)\left(\theta\right)=\sum_{j=-\ell}^{\ell}\boldsymbol{x}_{j}\mathrm{e}^{ij\left(\theta-\omega\right)}=\sum_{j=-\ell}^{\ell}\boldsymbol{x}_{j}\mathrm{e}^{-ij\omega}\mathrm{e}^{ij\theta}
\]
and therefore the discretised shift operator is the diagonal matrix
\[
\mathbb{S}_{jk}^{\omega}=\delta_{jk}\mathrm{e}^{-ij\omega}.
\]
Our map $\boldsymbol{F}$ is given as a Fourier series $\boldsymbol{F}\left(\boldsymbol{x},\theta\right)=\sum_{j=-\ell}^{\ell}\boldsymbol{F}_{j}\left(\boldsymbol{x}\right)\mathrm{e}^{ij\theta}$,
and therefore the invariant torus is a solution of 
\[
\boldsymbol{K}_{k}\mathrm{e}^{ik\omega}=\boldsymbol{F}_{k}\left(\sum_{j=-\ell}^{\ell}\boldsymbol{K}_{j}\mathrm{e}^{ij\theta}\right),k=-\ell\ldots\ell,
\]
which are well-defined algebraic equations if the torus itself is
normally hyperbolic. 

\subsection{\label{subsec:Bundles}Invariant vector bundles}

To find the invariant vector bundles, we solve equation (\ref{eq:ED-left-invariance})
in its discrete form and also with a constant and diagonal $\boldsymbol{\Lambda}$.
A constant $\boldsymbol{\Lambda}$ is only allowed due to discretisation,
otherwise we would need to consider irreducibility. Diagonality of
$\boldsymbol{\Lambda}$ is also due to the numerical methods that
ignore generalised eigenvectors. The discretised spectral equation
is
\begin{equation}
\sum_{j}\boldsymbol{u}_{j}\boldsymbol{A}_{k-j}\mathrm{e}^{ij\omega}=\lambda\boldsymbol{u}_{k},\label{eq:NUM-ev-left}
\end{equation}
where $\lambda$ are the diagonal elements of $\boldsymbol{\Lambda}$.
Equation (\ref{eq:NUM-ev-left}) is an eigenvector-eigenvalue problem.
We also consider the eigenvalue problem related to (\ref{eq:ED-right-invariance})
in the form of
\begin{equation}
\lambda\boldsymbol{w}_{k}=\mathrm{e}^{-ik\omega}\boldsymbol{A}_{k-j}\boldsymbol{w}_{j},\label{eq:NUM-ev-right}
\end{equation}
which has the same eigenvalues as (\ref{eq:NUM-ev-left}).

The eigenvalues are placed approximately along concentric circles
and we expect that each circle contains $2\ell+1$ or $2\left(2\ell+1\right)$
eigenvalues, depending on whether they can be represented by a real
or a pair of complex conjugate eigenvalues. Using k-nearest neighbours
\cite{biau2015lectures}, we first try to find $n_{cl}=n$ clusters
for the magnitudes of the eigenvalues $\left|\lambda\right|$, expecting
each cluster to have $2\ell+1$ eigenvalues. If this does not hold,
we decrease $n_{cl}$ until each cluster has an integer multiple of
$2\ell+1$ eigenvalues while $n_{cl}\ge n/2$. It is possible to not
find clusters that match our requirement, and there could be many
reasons for that. For example, the value of $\ell$, the number of
Fourier harmonics could be too low, or the actual spectral circles
are too close to each other to resolve them numerically. After a successful
search we have $m$ clusters, denoted by $\mathcal{C}_{l}$ such that
$\bigcup_{l=1}^{m}\mathcal{C}_{l}=\left\{ 1,\ldots,n\left(2\ell+1\right)\right\} $.

We now need to identify a representative eigenvalue for each cluster
of eigenvalues. We pick the one that corresponds to an eigenvector
whose dominant Fourier component has the smallest index in magnitude.
In mathematical formalism, if the eigenvectors are represented by
$\boldsymbol{u}_{j}^{k}$, where $k$ is the index within the cluster
we find the minimising index by
\begin{equation}
k_{l}=\arg\min_{k\in\mathcal{C}_{l}}\sum_{j=-\ell}^{\ell}\left|\boldsymbol{u}_{j}^{k}\right|2^{\left|j\right|}.\label{eq:NUM-evsel}
\end{equation}
Using the identified eigenvectors $\boldsymbol{u}_{j}^{k_{l}}$, we
recover the left and right invariant vector bundles of the linear
system (\ref{eq:ED-linsys}) in the form of
\begin{align*}
\boldsymbol{\Lambda}_{l} & =\lambda_{k_{l}}, & \boldsymbol{U}_{l}^{1}\left(\theta\right) & =\sum_{j=-\ell}^{\ell}\boldsymbol{u}_{j}^{k_{l}}\mathrm{e}^{ij\theta}, & \boldsymbol{W}_{l}^{1}\left(\theta\right) & =\sum_{j=-\ell}^{\ell}\boldsymbol{w}_{j}^{k_{l}}\mathrm{e}^{ij\theta} & \text{if}\;\;\#\mathcal{C}_{l}=\left(2\ell+1\right)\\
\boldsymbol{\Lambda}_{l} & =\begin{pmatrix}\lambda_{k_{l}} & 0\\
0 & \overline{\lambda}_{k_{l}}
\end{pmatrix}, & \boldsymbol{U}_{l}^{1}\left(\theta\right) & =\sum_{j=-\ell}^{\ell}\begin{pmatrix}\boldsymbol{u}_{j}^{k_{l}}\\
\overline{\boldsymbol{u}}_{j}^{k_{l}}
\end{pmatrix}\mathrm{e}^{ij\theta}, & \boldsymbol{W}_{l}^{1}\left(\theta\right) & =\sum_{j=-\ell}^{\ell}\begin{pmatrix}\boldsymbol{w}_{j}^{k_{l}} & \overline{\boldsymbol{w}}_{j}^{k_{l}}\end{pmatrix}\mathrm{e}^{ij\theta} & \text{if}\;\;\#\mathcal{C}_{l}=2\left(2\ell+1\right).
\end{align*}
Finally, the coordinate transformation that brings the nonlinear system
(\ref{eq:MAPSysDef}) into a form with diagonal and constant linear
part is 
\begin{align*}
\boldsymbol{\Lambda} & =\begin{pmatrix}\boldsymbol{\Lambda}_{1} &  & \boldsymbol{0}\\
 & \ddots\\
\boldsymbol{0} &  & \boldsymbol{\Lambda}_{m}
\end{pmatrix} & \boldsymbol{U}^{d}\left(\theta\right) & =\begin{pmatrix}\boldsymbol{U}_{1}^{1}\left(\theta\right)\\
\vdots\\
\boldsymbol{U}_{m}^{1}\left(\theta\right)
\end{pmatrix} & \boldsymbol{W}^{d}\left(\theta\right) & =\begin{pmatrix}\boldsymbol{W}_{1}^{d}\left(\theta\right) &  & \boldsymbol{W}_{m}^{d}\left(\theta\right)\end{pmatrix},
\end{align*}
and the transformed map about the invariant torus is obtained as
\[
\boldsymbol{\Lambda}\boldsymbol{x}+\boldsymbol{N}\left(\boldsymbol{x},\theta\right)=\boldsymbol{U}^{d}\left(\theta+\omega\right)\boldsymbol{F}\left(\boldsymbol{W}^{d}\left(\theta\right)\boldsymbol{x},\theta\right),
\]
where $\boldsymbol{N}\left(\boldsymbol{x},\theta\right)=\mathcal{O}\left(\left|\boldsymbol{x}\right|^{2}\right)$.

\subsection{\label{subsec:FoilCalc}Invariant foliation by series expansion}

To create an invariant foliation we need to pick an index set $\mathcal{I}$
that designates a set of diagonal elements of $\boldsymbol{\Lambda}$,
which should include complex conjugate pairs if a complex eigenvalue
is chosen. To make notation simpler, we assume that the first $\nu$
diagonal elements of $\boldsymbol{\Lambda}$ are the chosen eigenvalues,
as we are free to order them as necessary, thus $\mathcal{I}=\left\{ 1,\ldots,\nu\right\} $.
Using a mixed power and Fourier series expansion we then solve the
invariance equation
\begin{equation}
\boldsymbol{R}\left(\boldsymbol{U}\left(\boldsymbol{x},\theta\right),\theta\right)-\boldsymbol{U}\left(\boldsymbol{\Lambda}\boldsymbol{x}+\boldsymbol{N}\left(\boldsymbol{x},\theta\right),\theta\right)=\boldsymbol{0}.\label{eq:FOIL-power-invariance}
\end{equation}
The ansatz is a mixed trigonometric power series
\begin{align*}
\boldsymbol{U}\left(\boldsymbol{\theta},\boldsymbol{x}\right) & =\sum_{j=1}^{\sigma}\boldsymbol{U}^{j}\left(\boldsymbol{\theta}\right)\boldsymbol{x}^{\otimes j},\\
\boldsymbol{R}\left(\boldsymbol{\theta},\boldsymbol{x}\right) & =\sum_{j=1}^{\sigma}\boldsymbol{R}^{j}\left(\boldsymbol{\theta}\right)\boldsymbol{x}^{\otimes j},
\end{align*}
where
\begin{align*}
\boldsymbol{R}^{j}\left(\boldsymbol{\theta}\right)\boldsymbol{z}^{\otimes j} & =\sum_{i_{0}\cdots i_{j},\boldsymbol{k}}\boldsymbol{e}_{i_{0}}S_{i_{0}i_{1}\cdots i_{j}}^{j,k}\mathrm{e}^{ik\boldsymbol{\theta}}\left(\boldsymbol{e}_{i_{1}}\cdot\boldsymbol{z}\right)\cdots\left(\boldsymbol{e}_{i_{j}}\cdot\boldsymbol{z}\right),\\
\boldsymbol{U}^{j}\left(\boldsymbol{\theta}\right)\boldsymbol{x}^{\otimes j} & =\sum_{i_{0}\cdots i_{j},\boldsymbol{k}}\boldsymbol{e}_{i_{0}}U_{i_{0}i_{1}\cdots i_{j}}^{j,k}\mathrm{e}^{ik\boldsymbol{\theta}}\left(\boldsymbol{e}_{i_{1}}\cdot\boldsymbol{x}\right)\cdots\left(\boldsymbol{e}_{i_{j}}\cdot\boldsymbol{x}\right),
\end{align*}
and $\boldsymbol{e}_{i}$ are the $i$-th canonical unit vectors of
the right dimension, e.g., $\boldsymbol{e}_{1}=\left(1,0,\cdots\right)$.
Substituting the ansatz into (\ref{eq:FOIL-power-invariance}) and
separating the $j$-th order terms leads to the homological equation

\begin{equation}
\boldsymbol{R}^{1}\left(\boldsymbol{\theta}\right)\boldsymbol{U}^{j}\left(\boldsymbol{\theta}\right)\boldsymbol{x}^{\otimes j}+\boldsymbol{R}^{j}\left(\boldsymbol{\theta}\right)\left(\boldsymbol{U}^{1}\left(\boldsymbol{\theta}\right)\boldsymbol{x}\right)^{\otimes j}-\boldsymbol{U}^{j}\left(\boldsymbol{\theta}+\boldsymbol{\omega}\right)\left(\boldsymbol{\Lambda}\boldsymbol{x}\right)^{\otimes j}=\boldsymbol{\Gamma}^{j}\left(\boldsymbol{\theta}\right)\boldsymbol{x}^{\otimes j},\label{eq:FOIL-power-homological}
\end{equation}
where $\boldsymbol{\Gamma}^{j}\left(\boldsymbol{\theta}\right)$ are
terms composed of $\boldsymbol{R}^{k}$ and $\boldsymbol{U}^{l}$,
$k\neq j$, $l\neq j$. 

We solve the homological equation (\ref{eq:FOIL-power-homological})
in increasing order of $j$. The linear solution can be chosen as
\begin{align*}
\boldsymbol{U}^{1}\left(\theta\right) & =\begin{pmatrix}\boldsymbol{e}_{1} & \boldsymbol{e}_{2} & \cdots & \boldsymbol{e}_{\nu}\end{pmatrix}^{T}\\
\boldsymbol{R}^{1}\left(\theta\right) & =\begin{pmatrix}\boldsymbol{e}_{1} & \boldsymbol{e}_{2} & \cdots & \boldsymbol{e}_{\nu}\end{pmatrix}^{T}\boldsymbol{\Lambda}\begin{pmatrix}\boldsymbol{e}_{1} & \boldsymbol{e}_{2} & \cdots & \boldsymbol{e}_{\nu}\end{pmatrix}.
\end{align*}
The nonlinear terms are determined by equation
\[
\lambda_{i_{0}}U_{i_{0}i_{1}\cdots i_{j}}^{j,k}+R_{i_{0}i_{1}\cdots i_{j}}^{j,k}-\lambda_{i_{1}}\cdots\lambda_{i_{j}}\mathrm{e}^{ik\boldsymbol{\omega}}U_{i_{0}i_{1}\cdots i_{j}}^{j,k}=\boldsymbol{\Gamma}_{i_{0}i_{1}\cdots i_{j}}^{j,k},
\]
which has the solution
\begin{align}
U_{i_{0}i_{1}\cdots i_{j}}^{j,k} & =\frac{1}{\lambda_{i_{0}}-\lambda_{i_{1}}\cdots\lambda_{i_{j}}\mathrm{e}^{ik\omega}}\Gamma_{i_{0}i_{1}\cdots i_{j}}^{j,k} & R_{i_{0}i_{1}\cdots i_{j}}^{j,k} & =0\;\text{or}\label{eq:FOIL-pow-nonres}\\
U_{i_{0}i_{1}\cdots i_{j}}^{j,k} & =0 & R_{i_{0}i_{1}\cdots i_{j}}^{j,k} & =\Gamma_{i_{0}i_{1}\cdots i_{j}}^{j,k}.\label{eq:FOIL-pow-res}
\end{align}
It is only possible to choose solution (\ref{eq:FOIL-pow-nonres})
if any of the indices $i_{1}\cdots i_{j}$ are not part of $\mathcal{I}$.
Therefore we must have 
\begin{equation}
\lambda_{i_{0}}-\lambda_{i_{1}}\cdots\lambda_{i_{j}}\mathrm{e}^{ik\omega}\neq0,\label{eq:FOIL-pow-rescon}
\end{equation}
when any of the indices $i_{1}\cdots i_{j}$ are not part of $\mathcal{I}$.
This is the same condition as (\ref{eq:FOIL-non-resonance-1}) expressed
in a discretised fashion. If we have $i_{0}i_{1}\cdots i_{j}\in\mathcal{I}$
the denominator $\lambda_{i_{0}}-\lambda_{i_{1}}\cdots\lambda_{i_{j}}\mathrm{e}^{ik\omega}$
determines whether we choose solution (\ref{eq:FOIL-pow-res}) over
(\ref{eq:FOIL-pow-nonres}).

In this paper we insist to create a $\theta$ independent, i.e, autonomous
$\boldsymbol{R}$ and therefore, we are only allowed to choose (\ref{eq:FOIL-pow-res})
for $k=0$. This excludes parametric resonances, where (\ref{eq:FOIL-pow-rescon})
does not hold for $k\neq0$.

Invariant manifolds can be calculated in the same way, except that
the invariance equation is (\ref{eq:MANIF-MAP-invar}). We do not
detail how this is done, because the invariant manifold can also be
recovered from two complementary invariant foliations as described
in the following section. The same holds true for invariant manifolds
of vector fields, which are calculated by series expanding $D_{1}\boldsymbol{W}\left(\boldsymbol{z},\theta\right)\boldsymbol{R}\left(\boldsymbol{z},\theta\right)+\omega D_{2}\boldsymbol{W}\left(\boldsymbol{z},\theta\right)=\boldsymbol{F}\left(\boldsymbol{W}\left(\boldsymbol{z},\theta\right),\theta\right)$.

\subsection{\label{subsec:Reconstruct}Reconstructing invariant manifolds}

We assume that two foliations are calculated to satisfy the two invariance
equations
\begin{align}
\boldsymbol{R}\left(\boldsymbol{U}\left(\boldsymbol{x},\theta\right),\theta\right) & =\boldsymbol{U}\left(\boldsymbol{F}\left(\boldsymbol{x},\theta\right),\theta+\omega\right),\label{eq:FOIL-inv-P1}\\
\boldsymbol{S}\left(\boldsymbol{V}\left(\boldsymbol{x},\theta\right),\theta\right) & =\boldsymbol{V}\left(\boldsymbol{F}\left(\boldsymbol{x},\theta\right),\theta+\omega\right),\label{eq:FOIL-inv-P2}
\end{align}
such that the sought after invariant manifolds is defined by $\mathcal{M}=\left\{ \left(\boldsymbol{x},\theta\right)\in X\times\mathbb{T}:\boldsymbol{V}\left(\boldsymbol{x},\theta\right)=\boldsymbol{0}\right\} $
with the dynamics represented by the map $\boldsymbol{R}$. To make
sure that the two foliations are complementary, for equation (\ref{eq:FOIL-inv-P2})
we use the index set $\mathcal{I}^{c}=\left\{ 1,\ldots,n\right\} \setminus\mathcal{I}$.
In order to find a manifold immersion $\boldsymbol{W}$ such that
the manifold invariance equation
\begin{equation}
\boldsymbol{W}\left(\boldsymbol{R}\left(\boldsymbol{z},\theta\right),\theta+\omega\right)=\boldsymbol{F}\left(\boldsymbol{W}\left(\boldsymbol{z},\theta\right),\theta\right)\label{eq:MANIF-MAP-invar}
\end{equation}
holds we need to solve the equations
\begin{equation}
\begin{array}{rl}
\boldsymbol{U}\left(\boldsymbol{W}\left(\boldsymbol{z},\theta\right),\theta\right) & =\boldsymbol{z},\\
\boldsymbol{V}\left(\boldsymbol{W}\left(\boldsymbol{z},\theta\right),\theta\right) & =\boldsymbol{0},
\end{array}\label{eq:FOIL-manifold}
\end{equation}
We assume that the encoders are separated into linear and nonlinear
parts, such that $\boldsymbol{U}\left(\boldsymbol{x},\theta\right)=\boldsymbol{U}^{1}\left(\theta\right)\boldsymbol{x}+\boldsymbol{U}^{nl}\left(\boldsymbol{x},\theta\right)$
and $\boldsymbol{V}\left(\boldsymbol{x},\theta\right)=\boldsymbol{V}^{1}\left(\theta\right)\boldsymbol{x}+\boldsymbol{V}^{nl}\left(\boldsymbol{x},\theta\right)$.
The equations (\ref{eq:FOIL-manifold}) can be re-written into the
following form
\[
\begin{pmatrix}\boldsymbol{U}^{1}\left(\theta\right)\\
\boldsymbol{V}^{1}\left(\theta\right)
\end{pmatrix}\boldsymbol{W}=\begin{pmatrix}\boldsymbol{z}-\boldsymbol{U}^{nl}\left(\boldsymbol{W},\theta\right)\\
-\boldsymbol{V}^{nl}\left(\boldsymbol{W},\theta\right)
\end{pmatrix},
\]
and can be solved by iteration
\begin{equation}
\boldsymbol{W}=\begin{pmatrix}\boldsymbol{U}^{1}\left(\theta\right)\\
\boldsymbol{V}^{1}\left(\theta\right)
\end{pmatrix}^{-1}\begin{pmatrix}\boldsymbol{z}-\boldsymbol{U}^{nl}\left(\boldsymbol{W},\theta\right)\\
-\boldsymbol{V}^{nl}\left(\boldsymbol{W},\theta\right)
\end{pmatrix}.\label{eq:FOIL-manif-iter}
\end{equation}

\subsection{\label{subsec:Freq}Frequencies and damping ratios}

As we have discussed previously \cite{Szalai2023Fol}, the dynamics
given by the map $\boldsymbol{R}$ should be interpreted in the nonlinear
coordinate system given by the manifold immersion $\boldsymbol{W}$.
This means that the dynamical properties cannot just be derived from
map $\boldsymbol{R}$, the immersion $\boldsymbol{W}$ also needs
to be considered. Here we summarised how this is done, and refer to
the companion paper \cite{SzalaiForcedData2024}, for detailed derivation.
Here we are concerned about the frequencies and damping ratios of
the vibration that surrounds the invariant torus $\mathcal{T}$ and
contained within the invariant manifolds $\mathcal{M}$. Therefore
we assume that $\boldsymbol{R}$ is autonomous as per the transformation
in section \ref{subsec:FoilCalc}, but $\boldsymbol{W}$remains $\theta$-dependent.
It is also assumed that the manifold $\mathcal{M}$ is two-dimensional
for each point of the torus $\mathcal{T}$ and the linear part if
$\boldsymbol{R}$ has a pair of complex conjugate eigenvalues. Under
these assumptions we can write that
\begin{equation}
\boldsymbol{R}\left(\boldsymbol{z}\right)=\begin{pmatrix}s\left(z,\overline{z}\right)\\
\overline{s}\left(z,\overline{z}\right)
\end{pmatrix},\label{eq:FREQ-R}
\end{equation}
where function $s$ is scalar valued, $z\in\mathbb{C}$ and $\overline{\;}$
means complex conjugation. Given the form (\ref{eq:FREQ-R}), we can
use a polar coordinate system $z=r\mathrm{e}^{i\gamma}$ and therefore
define
\begin{align*}
\widehat{\boldsymbol{W}}\left(r,\gamma,\theta\right) & =\boldsymbol{W}\left(r\mathrm{e}^{i\gamma},r\mathrm{e}^{-i\gamma},\theta\right),\\
R\left(r\right) & =\left|s\left(r\mathrm{e}^{i\gamma},r\mathrm{e}^{-i\gamma}\right)\right|\\
T\left(r\right) & =\arg\,\mathrm{e}^{-i\gamma}s\left(r\mathrm{e}^{i\gamma},r\mathrm{e}^{-i\gamma}\right)
\end{align*}
such that the invariance equation (\ref{eq:FOIL-power-invariance})
becomes
\[
\widehat{\boldsymbol{W}}\left(R\left(r\right),\gamma+T\left(r\right),\theta+\omega\right)=\boldsymbol{F}\left(\widehat{\boldsymbol{W}}\left(r,\gamma,\theta\right),\theta\right).
\]
We now need to create a transformation that makes sure two things:
the amplitude given by $\widehat{\boldsymbol{W}}$ increases linearly
with parameter $r$ and that there is no phase shift between closed
curves defined by the graphs of $\gamma\mapsto\widehat{\boldsymbol{W}}\left(r_{1},\gamma,\theta\right)$
and $\gamma\mapsto\widehat{\boldsymbol{W}}\left(r_{2},\gamma,\theta\right)$
for $r_{1}\neq r_{2}$. The first condition makes sure that damping
ratios are correct, the second makes sure that frequencies are correct.
A lengthy derivation in \cite{Szalai2023Fol} yields that this transformation
is 
\[
\tilde{\boldsymbol{W}}\left(r,\gamma,\boldsymbol{\theta}\right)=\widehat{\boldsymbol{W}}\left(\rho\left(r\right),\gamma+\phi\left(\rho\left(r\right)\right),\boldsymbol{\theta}\right),
\]
where
\begin{align*}
\rho\left(r\right) & =\kappa^{-1}\left(r\right)\\
\phi\left(r\right) & =-\int_{0}^{r}\left[\int_{\mathbb{T}^{d+1}}\left\langle D_{2}\widehat{\boldsymbol{W}}\left(\tilde{r},\gamma,\boldsymbol{\theta}\right),D_{2}\widehat{\boldsymbol{W}}\left(\tilde{r},\gamma,\boldsymbol{\theta}\right)\right\rangle \mathrm{d}\gamma\mathrm{d}\boldsymbol{\theta}\right]^{-1}\int_{\mathbb{T}^{d+1}}\left\langle D_{1}\widehat{\boldsymbol{W}}\left(\tilde{r},\gamma,\boldsymbol{\theta}\right),D_{2}\widehat{\boldsymbol{W}}\left(\tilde{r},\gamma,\boldsymbol{\theta}\right)\right\rangle \mathrm{d}\gamma\mathrm{d}\boldsymbol{\theta}\mathrm{d}\tilde{r}
\end{align*}
with
\[
\kappa\left(\rho\right)=\left(2\pi\right)^{-\left(d+1\right)/2}\sqrt{\int_{\mathbb{T}^{d+1}}\left|\widehat{\boldsymbol{W}}\left(\rho\left(r\right),\gamma,\boldsymbol{\theta}\right)\right|^{2}\mathrm{d}\gamma\mathrm{d}\boldsymbol{\theta}}.
\]
In the new corrected coordinate system the invariance equation becomes
\[
\tilde{\boldsymbol{W}}\left(\tilde{R}\left(r\right),\gamma+\tilde{T}\left(r\right),\boldsymbol{\theta}\right)=\boldsymbol{F}\left(\tilde{\boldsymbol{W}}\left(r,\gamma,\boldsymbol{\theta}\right),\theta\right),
\]
where

\begin{align*}
\tilde{R}\left(r\right) & =\kappa\left(R\left(\rho\left(r\right)\right)\right),\\
\tilde{T}\left(r\right) & =T\left(\rho\left(r\right)\right)+\phi\left(\rho\left(r\right)\right)-\phi\left(R\left(\rho\left(r\right)\right)\right).
\end{align*}
Given that we no longer have amplitude and phase distortion, the instantaneous
frequency and the damping ratio can be calculated as
\begin{align*}
\omega\left(r\right) & =\tilde{T}\left(r\right)/\Delta t,\\
\xi\left(r\right) & =-\log\left(r^{-1}\tilde{R}\left(r\right)\right)/\tilde{T}\left(r\right),
\end{align*}
respectively, where $\Delta t$ is the duration of time, between two
consecutive points of the trajectory generated by $\boldsymbol{F}$.
The ROM in differential equation form becomes
\begin{align*}
\dot{r} & =-\zeta\left(r\right)\omega\left(r\right)r,\\
\dot{\theta} & =\omega\left(r\right).
\end{align*}

\section{Examples}

Here we investigate two examples, which are both two degree-of-freedom
mechanical systems. Due to limitations of polynomial expansions, we
will not consider higher dimensional systems here, those can be found
in the companion paper \cite{SzalaiForcedData2024}, where a more
suitable representation of the submersions $\boldsymbol{U}$ and $\boldsymbol{V}$
is used. The calculations have the following steps
\begin{enumerate}
\item Calculate a discrete-time map from the vector field (ODE) by Taylor
expanding a numerical ODE solver using automatic differentiation.
\item Identify the invariant torus (periodic orbit) from both the ODE and
the generated map. (not detailed here)
\item Identify invariant vector bundles about the torus (ODE and map) as
in section \ref{subsec:Bundles}.
\item Calculate the sought after invariant manifold directly (ODE and map).
(not detailed here)
\item Calculate two invariant foliations (section \ref{subsec:FoilCalc})
and reconstruct the invariant manifold (section \ref{subsec:Reconstruct})
for the map only.
\item \label{enu:frequencies}Recover the frequencies and damping ratios
for the sought after vibration mode (section \ref{subsec:Freq}) for
all models and manifolds identified.
\end{enumerate}
When comparing various methods for calculating the ROM, we use the
relative error 
\begin{equation}
E_{\mathit{rel}}=\frac{\left\Vert \boldsymbol{F}\left(\boldsymbol{W}\left(\boldsymbol{z},\theta\right),\theta\right)-\boldsymbol{W}\left(\boldsymbol{R}\left(\boldsymbol{z},\theta\right),\theta+\omega\right)\right\Vert }{\left\Vert \boldsymbol{W}\left(\boldsymbol{z},\theta\right)\right\Vert }\label{eq:E-rel}
\end{equation}
as a measure of accuracy. We also consider how the error depends on
the amplitude, that is calculated as $A=\left\Vert \boldsymbol{W}\left(\boldsymbol{z},\theta\right)\right\Vert $.

\subsection{\label{subsec:onemass}A planar oscillator}

The first example is a forced geometrically nonlinear oscillator.
This oscillator appeared in \cite{TOUZE200477} without forcing and
damping. The schematic of the oscillator can be seen in figure \ref{fig:OneMass}.
\begin{figure}
\begin{centering}
\includegraphics[scale=0.7]{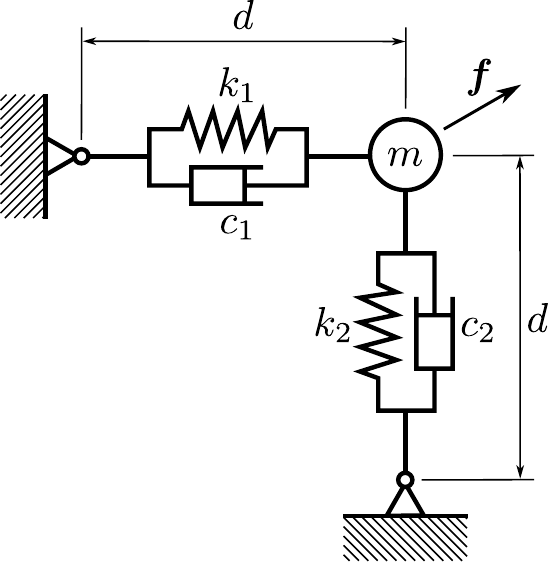}
\par\end{centering}
\caption{\label{fig:OneMass}A planar single mass oscillator in its equilibrium
configuration. Gravity is ignored. The parameters are dimensionless
$m=1$, $d=1$, $k_{1}=1$, $k_{2}$, $c_{1}=0.06$,$c_{2}=0.12$
and the forcing is $\boldsymbol{f}=A\left(\cos\left(\omega t+\pi/3\right),\cos\omega t\right)$,
where $A$ is the forcing amplitude and $\omega$ is the forcing frequency.}
\end{figure}
The equations of motion are
\begin{equation}
\begin{array}{rl}
\dot{x}_{1} & =v_{1}\\
\dot{x}_{2} & =v_{2}\\
\dot{v}_{1} & =-\frac{c_{1}\left(x_{1}+1\right)\left(v_{1}\left(x_{1}+1\right)+v_{2}x_{2}\right)}{\left(x_{1}+1\right){}^{2}+x_{2}^{2}}-\frac{c_{2}x_{1}\left(v_{1}x_{1}+v_{2}\left(x_{2}+1\right)\right)}{x_{1}^{2}+\left(x_{2}+1\right){}^{2}}\\
 & \quad-\frac{k_{1}\left(x_{1}+1\right)\left(\sqrt{\left(x_{1}+1\right){}^{2}+x_{2}^{2}}-1\right)}{\sqrt{\left(x_{1}+1\right){}^{2}+x_{2}^{2}}}-k_{2}x_{1}\left(1-\frac{1}{\sqrt{x_{1}^{2}+\left(x_{2}+1\right){}^{2}}}\right)+A\sin\left(\omega t+\pi/3\right)\\
\dot{v}_{2} & =-\frac{c_{1}x_{2}\left(v_{1}\left(x_{1}+1\right)+v_{2}x_{2}\right)}{\left(x_{1}+1\right){}^{2}+x_{2}^{2}}-\frac{c_{2}\left(x_{2}+1\right)\left(v_{1}x_{1}+v_{2}\left(x_{2}+1\right)\right)}{x_{1}^{2}+\left(x_{2}+1\right){}^{2}}\\
 & \quad-k_{1}x_{2}\left(1-\frac{1}{\sqrt{\left(x_{1}+1\right){}^{2}+x_{2}^{2}}}\right)-\frac{k_{2}\left(x_{2}+1\right)\left(\sqrt{x_{1}^{2}+\left(x_{2}+1\right){}^{2}}-1\right)}{\sqrt{x_{1}^{2}+\left(x_{2}+1\right){}^{2}}}+A\cos\omega t.
\end{array}\label{eq:onemass-ode}
\end{equation}
The unforced natural frequencies are $\omega_{1}=1.0$, $\omega_{2}=1.58$,
the spectral quotients $\beth_{1}=1$ and $\beth_{2}=2$. When forcing
with amplitude $A=0.03$ and frequency $\omega=1.2$, the natural
frequencies become $\omega_{1}=1.031$, $\omega_{2}=1.557$ and the
spectral quotients become $\beth_{1}=1$, \textbf{$\beth_{2}=1.79$}.
\begin{figure}
\begin{centering}
\includegraphics[width=0.99\textwidth]{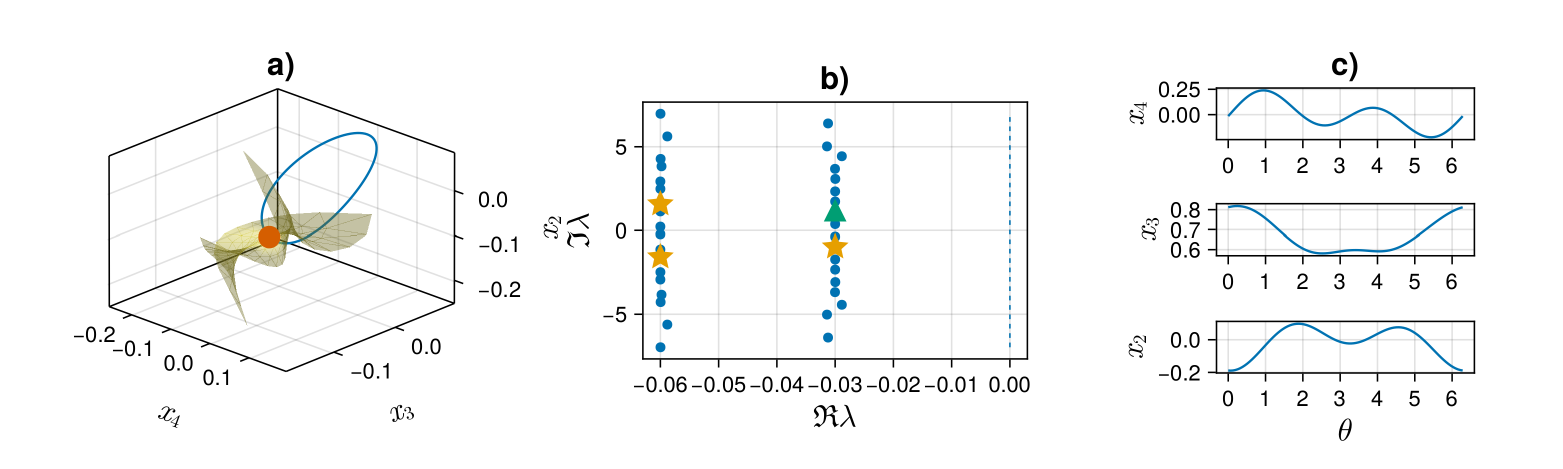}
\par\end{centering}
\caption{\label{fig:onemass-spectrum}a) The invariant torus and the invariant
manifold at one point along the torus. b) The spectrum of the linear
dynamics about the torus. The stars denote the representative eigenvalues
as selected by (\ref{eq:NUM-evsel}), the triangle denotes the eigenvalue
use for model reduction. c) An incomplete representation of the invariant
vector bundle: three out of four coordinates of the first vector that
spans the two-dimensional invariant vector bundle.}

\end{figure}

The spectrum, the invariant torus, the invariant manifold and information
about the vector bundle are depicted in figure \ref{fig:onemass-spectrum}.
The spectrum points in figure \ref{fig:onemass-spectrum}(b) are calculated
as $\lambda=\left(\Delta t\right)^{-1}\log\hat{\lambda}$, where $\hat{\lambda}$
is the spectrum point of the discrete-time dynamics (see section \ref{subsec:Bundles})
and $\Delta t=0.8$ is the sampling period. The four spectrum points
that are used to calculate the vector bundles are denoted by stars
and a triangle. 

We calculate the ROM in three different ways. First we solve the invariance
equation for the ordinary differential equation, which is 
\begin{equation}
D_{1}\boldsymbol{W}\left(\boldsymbol{z},\theta\right)\boldsymbol{R}\left(\boldsymbol{z},\theta\right)+\omega D_{2}\boldsymbol{W}\left(\boldsymbol{z},\theta\right)=\boldsymbol{F}\left(\boldsymbol{W}\left(\boldsymbol{z},\theta\right),\theta\right),\label{eq:MANIF-ODE-invar}
\end{equation}
secondly we solve the invariance equation (\ref{eq:MANIF-MAP-invar})
for the generated Poincare map and finally we use invariant foliations
to obtain the ROM. The result of our calculation can be seen in figure
\ref{fig:OneMass}. 
\begin{figure}
\begin{centering}
\includegraphics[width=0.8\textwidth]{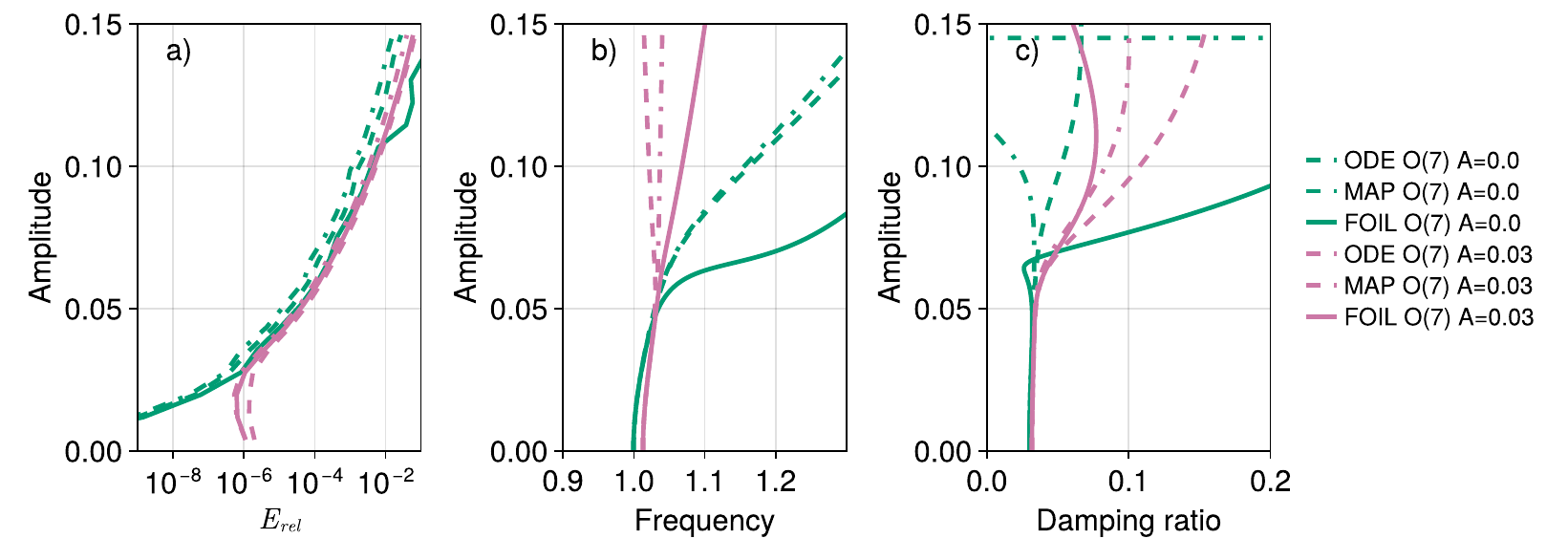}\\
\includegraphics[width=0.8\textwidth]{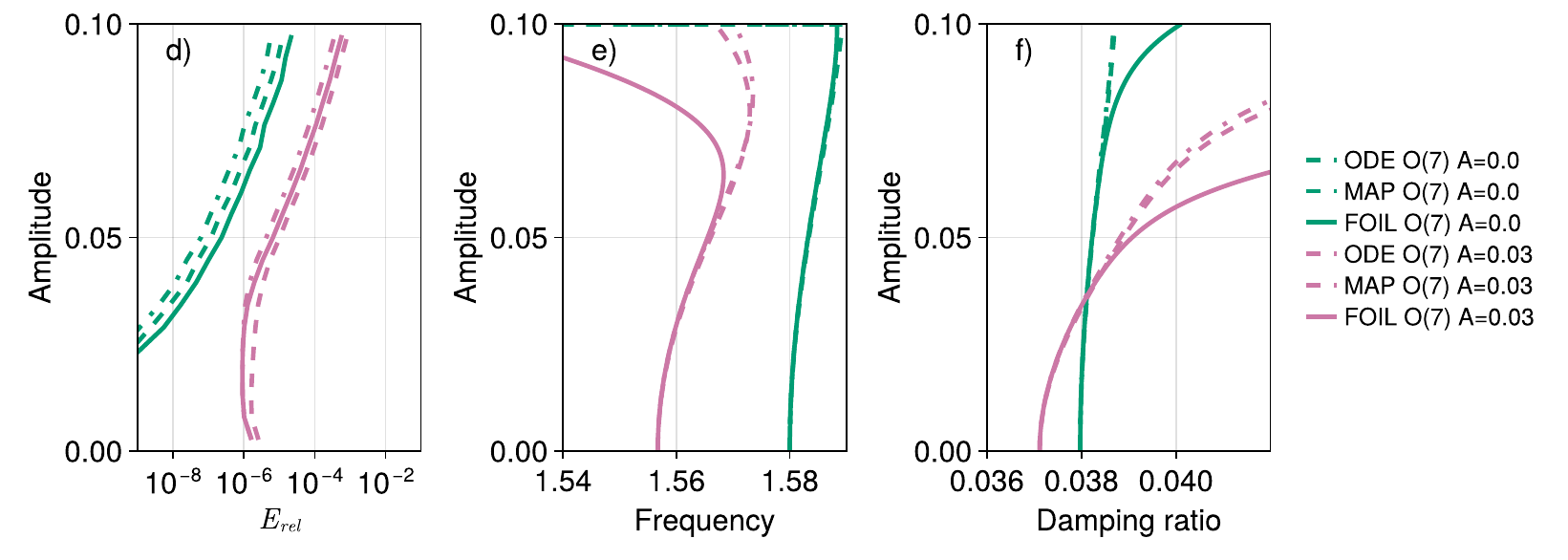}
\par\end{centering}
\caption{\label{fig:OneMass-results}Instantaneous frequency b,e) and damping
ratios c,f) about the invariant torus. The relative errors are shown
in panels a,d). The green lines are calculated for the unforced system,
the purple lines for the forced system with $A=0.03$. The calculations
are carried out directly for the vector field (\ref{eq:onemass-ode})
(ODE), the generated Poincare map with sampling period $\Delta t=0.8$
(MAP) or through the invariant foliations of the Poincare map (FOIL).
All polynomials were of order $7$ and the Fourier collocation used
$\ell=7$ harmonics.}
\end{figure}
 We notice that different calculations only agree for lower amplitudes,
we notice that the disagreement starts to occur when the mean relative
error increases above $10^{-4}$. The error of different calculations
of the same ROM are very close, which cannot explain the different
results. The most likely explanation is that the radius of convergence
of the power series expansion is at lower amplitudes than the maximum
displayed in figure \ref{fig:OneMass-results}. Fitting the invariance
equations to data can overcome the problem with the radius of convergence.
We also note that it was necessary to use order $\ell=7$ Fourier
collocation to reduce the calculation error at zero amplitudes. Order
$\ell=3$ has provided a uniform error just below $10^{-2}$ for all
amplitudes (data not shown).

\subsection{Two-mass oscillator with nonlinear springs and dampers}

As opposed to the geometrically nonlinear system, we consider a system
that is geometrically linear but has nonlinear components. The system,
as shown in figure \ref{fig:TwoMass}, consists of two block masses
sliding on a frictionless surface. The two masses are connected by
nonlinear springs and linear dampers. Forcing is applied to both masses
with a phase shift. 
\begin{figure}
\begin{centering}
\includegraphics[scale=0.7]{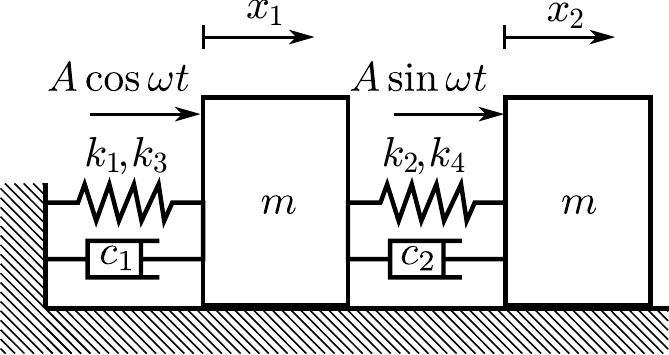}
\par\end{centering}
\caption{\label{fig:TwoMass}A two-mass oscillator. The parameters are dimensionless
$m=1$, $d=1$, $k_{1}=1$, $k_{2}=\sqrt{3}-A/2\sin^{2}\omega t-A/4\sin\omega t$,
$k_{3}=k_{4}=0.02$, $c_{1}=0.03$,$c_{2}=0.04$ and $\omega=0.76$
is the forcing frequency.}
\end{figure}
The equations of motion are
\begin{equation}
\begin{array}{rl}
\dot{x}_{1} & =v_{1}\\
\dot{x}_{2} & =v_{2}\\
\dot{v}_{1} & =-d_{1}v_{1}+d_{2}\left(v_{2}-v_{1}\right)-2k_{4}\left(x_{1}-x_{2}\right){}^{3}-k_{2}\left(x_{1}-x_{2}\right)-2k_{3}x_{1}^{3}-k_{1}x_{1}+A\cos\omega t\\
\dot{v}_{2} & =-d_{2}\left(v_{2}-v_{1}\right)-2k_{4}\left(x_{2}-x_{1}\right){}^{3}-k_{2}\left(x_{2}-x_{1}\right)+A\sin\omega t.
\end{array}\label{eq:twomass-ode}
\end{equation}
The unforced natural frequencies are $\omega_{1}=0.6551$ and $\omega_{2}=2.008$
with damping ratios $\zeta_{1}=0.0095$ and $\zeta_{2}=0.0243$. The
spectral quotients are $\beth_{1}=1.0$ and $\beth_{2}=7.86$. With
forcing at $A=0.1$ the natural frequencies are $\omega_{1}=0.6739$,
$\omega_{2}=2.0112$ with damping ratios $\zeta_{1}=0.0093$ and $\zeta_{2}=0.0242$.
With forcing, the spectral quotients become $\beth_{1}=1.0$ and $\beth_{2}=7.81$.
\begin{figure}
\begin{centering}
\includegraphics[width=0.99\textwidth]{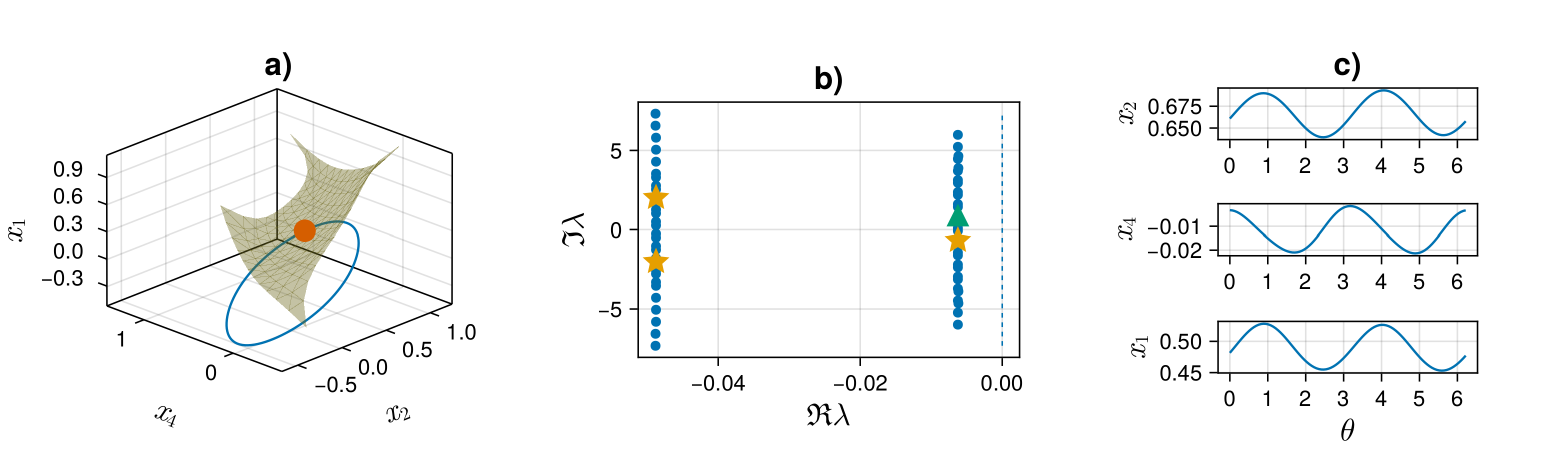}
\par\end{centering}
\caption{\label{fig:twomass-spectrum}a) The invariant torus and the invariant
manifold at one point along the torus. b) The spectrum of the linear
dynamics about the torus. The stars denote the representative eigenvalues
as selected by (\ref{eq:NUM-evsel}), the triangle denotes the eigenvalue
use for model reduction. c) An incomplete representation of the invariant
vector bundle: three out of four coordinates of the first vector that
spans the two-dimensional invariant vector bundle.}
\end{figure}
The results can be seen in figure \ref{fig:TwoMass-results}. When
calculating the ROMs, we notice that we can achieve better accuracy
than in the previous example of section \ref{subsec:onemass}. The
instantaneous frequencies agree well, but there are differences for
the forced system at higher amplitude, which again could be due to
the radius of convergence of the asymptotic power series expansion.
The instantaneous damping shows greater differences. In general, we
find that instantaneous damping ratios are sensitive to how they are
calculated numerically.
\begin{figure}
\begin{centering}
\includegraphics[width=0.8\textwidth]{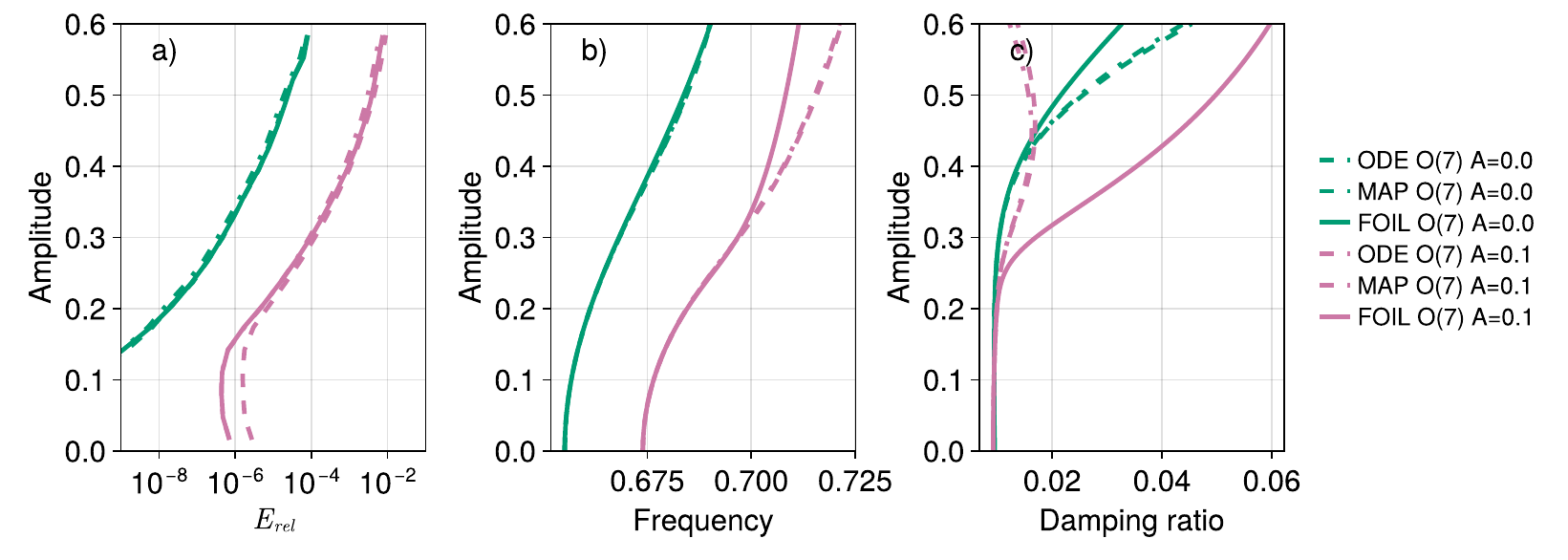}\\
\includegraphics[width=0.8\textwidth]{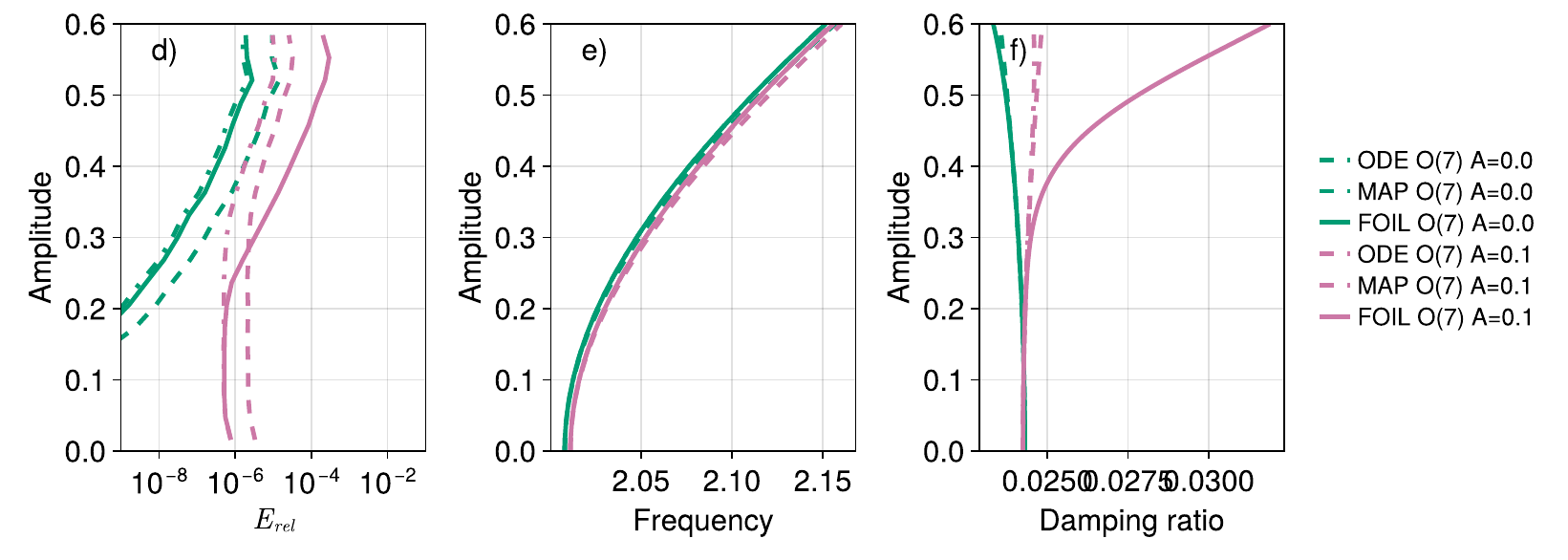}
\par\end{centering}
\caption{\label{fig:TwoMass-results}Instantaneous frequency b),e) and damping
ratios c),f) about the invariant torus. The relative errors are shown
in a),d). The green lines are calculated for the unforced system,
the purple lines for the forced system with $A=0.03$. The calculations
are carried out directly for the vector field (\ref{eq:twomass-ode})
(ODE), the generated Poincare map with sampling period $\Delta t=0.8$
(MAP) or through the invariant foliations of the Poincare map (FOIL).
All polynomials were of order $7$ and the Fourier collocation also
used $\ell=7$ harmonics.}
\end{figure}

As a final calculation, we demonstrate that even if we only calculate
the conjugate map $\boldsymbol{R}$ up to linear order, we can still
recover nonlinear behaviour that is encoded within the coordinate
system represented by the invariant foliations and manifolds. In figure
\ref{fig:twomass-O1} we calculated the invariant manifolds of the
vector field (\ref{eq:twomass-ode}) to order 7, while the conjugate
map $\boldsymbol{R}$ remained linear when the invariant manifold
and the invariant foliations were calculated. Figure \ref{fig:twomass-O1}
shows that while the relative accuracy $E_{\mathit{\ensuremath{rel}}}$
of the calculation reduces with assuming a linear $\boldsymbol{R}$,
for small amplitudes we can reconstruct frequencies and damping rations
accurately. For higher amplitudes the instantaneous frequency and
damping curves diverge earlier than in figure \ref{fig:TwoMass-results}.

\begin{figure}
\begin{centering}
\includegraphics[width=0.8\textwidth]{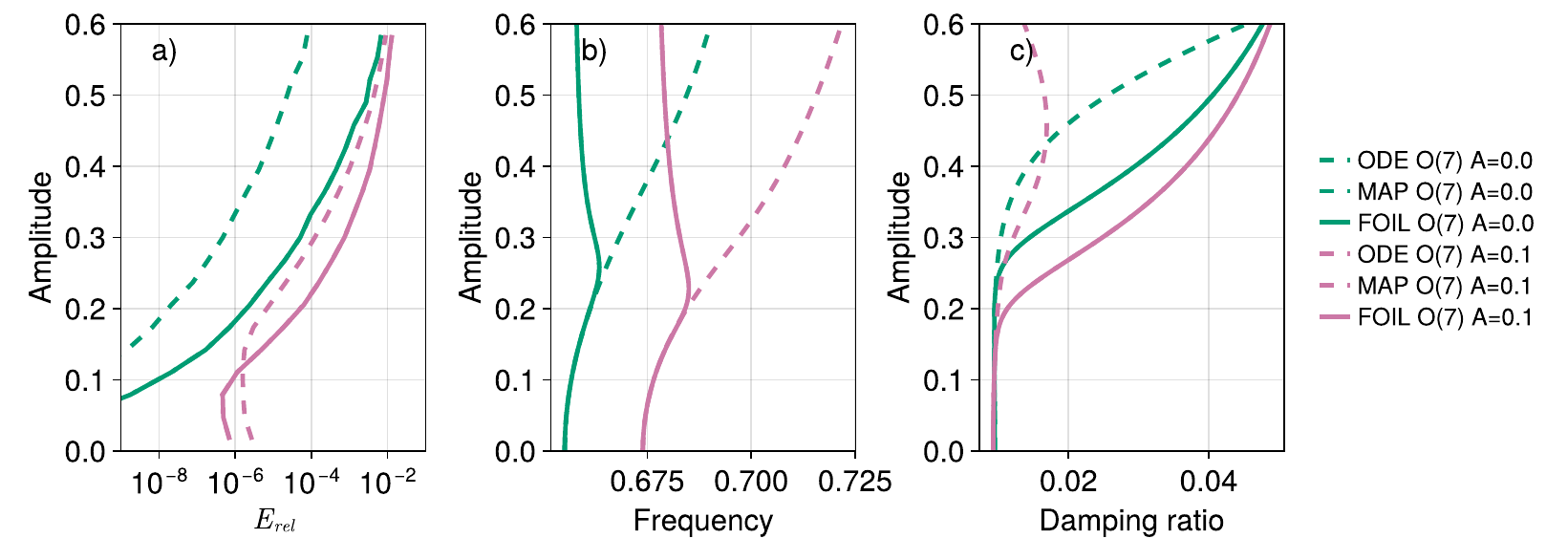}
\par\end{centering}
\caption{\label{fig:twomass-O1}This is a re-calculation of figure \ref{fig:TwoMass-results}(a-c)
assuming that the conjugate map $\boldsymbol{R}$ is linear for the
MAP and FOIL results. Note that the MAP and FOIL results are also
identical in this case.}
\end{figure}

\section{Conclusions}

We have shown the existence and uniqueness of invariant foliations
about quasi-periodic tori. The results are similar to the autonomous
case detailed in \cite{Szalai2020ISF}, but they are more restrictive,
because the non-resonance conditions involve not only individual spectral
points, but concentric circles. In practice, however, the resonance
conditions will remain discrete, even though the discretised transfer
operator still has many more spectral points than in the autonomous
case. In this paper we have eliminated all non-autonomous resonances
and therefore the normal form of the ROM became autonomous. This allowed
us to depict the vibrations about the invariant torus with frequencies
and damping ratios independent of where they are along the torus.
In situations where the forcing has resonances with the underlying
system, this is not possible.

The accuracy of the calculations was also investigated, showcasing
the differences between various methods of calculating ROMs. We conjecture
that the differences are due to the asymptotic nature of the calculations,
which has a radius of convergence that is smaller than the range of
amplitudes the we might be interested in.

In the companion paper \cite{SzalaiForcedData2024}, we investigate
how fitting invariant foliations directly to data affects accuracy.
We will also highlight that in a data-driven setting the smoothness
criterion of theorem \ref{thm:exists-unique} for uniqueness is unhelpful,
because the data fitting is global as opposed to asymptotic about
the invariant torus. Therefore our ongoing work focusses on finding
better uniqueness criteria that better aligns with data-driven methods.

\appendix

\section{Newton-Kantorovich theorem}

The main tool that we use for establishing the existence and uniqueness
of the invariant torus and the invariant foliation is the Newton-Kantorovich
theorem. The theorem is useful as it replaces contraction mapping
arguments used elsewhere.
\begin{thm}
\label{thm:Kantorovich}Let $X$ and $Y$ be Banach spaces and consider
an open subset $\Omega$ of $X$, a point $\boldsymbol{x}_{0}\in\Omega$
and a map $\mathcal{T}\in C^{1}\left(\Omega,Y\right)$ such that both
$D\mathcal{T}\left(\boldsymbol{x}_{0}\right),D\mathcal{T}^{-1}\left(\boldsymbol{x}_{0}\right)\in\mathcal{L}\left(X,Y\right)$.
Assume three constants $\lambda,\mu,\nu$ such that
\begin{gather*}
0<\lambda\mu\nu<\frac{1}{2}\quad\text{and}\quad B_{r}\left(\boldsymbol{x}_{0}\right)\subset\Omega,\quad\text{where}\quad r=\frac{1+\sqrt{1-2\lambda\mu\nu}}{\mu\nu},\\
\left\Vert D\mathcal{T}^{-1}\left(\boldsymbol{x}_{0}\right)\mathcal{T}\left(\boldsymbol{x}_{0}\right)\right\Vert _{X}\le\lambda,\\
\left\Vert D\mathcal{T}^{-1}\left(\boldsymbol{x}_{0}\right)\right\Vert _{\mathcal{L}\left(Y,X\right)}\le\mu,\\
\left\Vert D\mathcal{T}\left(\boldsymbol{x}\right)-D\mathcal{T}\left(\hat{\boldsymbol{x}}\right)\right\Vert _{\mathcal{L}\left(X,Y\right)}\le\nu\left\Vert \boldsymbol{x}-\hat{\boldsymbol{x}}\right\Vert _{X}\quad\text{for all}\quad\boldsymbol{x},\hat{\boldsymbol{x}}\in B_{r}\left(\boldsymbol{x}_{0}\right).
\end{gather*}
Then the sequence $\boldsymbol{x}_{0},\ldots,\boldsymbol{x}_{k+1}=\boldsymbol{x}_{k}-\left[D\mathcal{T}\left(\boldsymbol{x}_{k}\right)\right]^{-1}\boldsymbol{x}_{k}$
is contained in the ball $B_{\rho}\left(\boldsymbol{x}_{0}\right)$
where
\[
\rho=\frac{1-\sqrt{1-2\lambda\mu\nu}}{\mu\nu}\le r
\]
 and converges to the unique zero of $\mathcal{T}$ in $\overline{B_{\rho}\left(\boldsymbol{x}_{0}\right)}$.
\end{thm}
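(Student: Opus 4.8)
The plan is to follow the classical Newton--Kantorovich argument, establishing convergence of the Newton iterates directly by majorant/induction rather than by a single contraction estimate. First I would set notation: write $\mathcal{A}=D\mathcal{T}(\boldsymbol{x}_{0})$, which by hypothesis has bounded inverse with $\|\mathcal{A}^{-1}\|\le\mu$, and write $\boldsymbol{h}_{0}=-\mathcal{A}^{-1}\mathcal{T}(\boldsymbol{x}_{0})$ so that $\|\boldsymbol{h}_{0}\|\le\lambda$. The Newton step from $\boldsymbol{x}_{k}$ is $\boldsymbol{x}_{k+1}=\boldsymbol{x}_{k}-[D\mathcal{T}(\boldsymbol{x}_{k})]^{-1}\mathcal{T}(\boldsymbol{x}_{k})$, so the first thing to check is that $D\mathcal{T}(\boldsymbol{x}_{k})$ is invertible at each step; this follows from the Lipschitz bound on $D\mathcal{T}$ together with a Neumann series argument, provided $\boldsymbol{x}_{k}$ stays in $B_{r}(\boldsymbol{x}_{0})$, which is exactly what the induction must maintain.

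\textbf{Key steps.}
I would introduce the scalar majorant polynomial $p(t)=\tfrac{\nu}{2}t^{2}-\tfrac{1}{\mu}t+\tfrac{\lambda}{\mu}$, whose two roots are $\rho=\frac{1-\sqrt{1-2\lambda\mu\nu}}{\mu\nu}$ and $r=\frac{1+\sqrt{1-2\lambda\mu\nu}}{\mu\nu}$, and run the associated scalar Newton iteration $t_{0}=0$, $t_{k+1}=t_{k}-p(t_{k})/p'(t_{k})$, which is classically known to increase monotonically to $\rho$. The heart of the proof is the induction establishing, for all $k$,
\begin{gather*}
\|\boldsymbol{x}_{k+1}-\boldsymbol{x}_{k}\|\le t_{k+1}-t_{k},\qquad \|\boldsymbol{x}_{k}-\boldsymbol{x}_{0}\|\le t_{k}<\rho\le r.
\end{gather*}
For the inductive step one writes, using $\mathcal{T}(\boldsymbol{x}_{k})=\mathcal{T}(\boldsymbol{x}_{k})-\mathcal{T}(\boldsymbol{x}_{k-1})-D\mathcal{T}(\boldsymbol{x}_{k-1})(\boldsymbol{x}_{k}-\boldsymbol{x}_{k-1})$ (since the previous Newton step makes the bracketed correction vanish), the integral remainder formula
\begin{gather*}
\mathcal{T}(\boldsymbol{x}_{k})=\int_{0}^{1}\bigl(D\mathcal{T}(\boldsymbol{x}_{k-1}+s(\boldsymbol{x}_{k}-\boldsymbol{x}_{k-1}))-D\mathcal{T}(\boldsymbol{x}_{k-1})\bigr)(\boldsymbol{x}_{k}-\boldsymbol{x}_{k-1})\,ds,
\end{gather*}
giving $\|\mathcal{T}(\boldsymbol{x}_{k})\|\le\tfrac{\nu}{2}\|\boldsymbol{x}_{k}-\boldsymbol{x}_{k-1}\|^{2}$. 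Invertibility of $D\mathcal{T}(\boldsymbol{x}_{k})$ with $\|D\mathcal{T}(\boldsymbol{x}_{k})^{-1}\|\le -1/p'(t_{k})$ comes from writing $D\mathcal{T}(\boldsymbol{x}_{k})=\mathcal{A}\bigl(I+\mathcal{A}^{-1}(D\mathcal{T}(\boldsymbol{x}_{k})-\mathcal{A})\bigr)$ and bounding $\|\mathcal{A}^{-1}(D\mathcal{T}(\boldsymbol{x}_{k})-\mathcal{A})\|\le\mu\nu t_{k}<1$. Combining these two bounds yields $\|\boldsymbol{x}_{k+1}-\boldsymbol{x}_{k}\|\le -p(t_{k})/p'(t_{k})=t_{k+1}-t_{k}$, and the telescoping sum with $t_{k}\to\rho$ keeps everything inside $B_{\rho}(\boldsymbol{x}_{0})$.

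\textbf{Conclusion and main obstacle.}
Since $\{t_{k}\}$ is Cauchy, so is $\{\boldsymbol{x}_{k}\}$ by the majorant inequality; completeness of $X$ gives a limit $\boldsymbol{x}_{\star}\in\overline{B_{\rho}(\boldsymbol{x}_{0})}$, and passing to the limit in $\|\mathcal{T}(\boldsymbol{x}_{k})\|\le\tfrac{\nu}{2}(t_{k}-t_{k-1})^{2}\to0$ with continuity of $\mathcal{T}$ shows $\mathcal{T}(\boldsymbol{x}_{\star})=\boldsymbol{0}$. For uniqueness in $\overline{B_{\rho}(\boldsymbol{x}_{0})}$, if $\boldsymbol{y}$ were another zero, I would expand $\boldsymbol{0}=\mathcal{T}(\boldsymbol{y})-\mathcal{T}(\boldsymbol{x}_{\star})=\int_{0}^{1}D\mathcal{T}(\boldsymbol{x}_{\star}+s(\boldsymbol{y}-\boldsymbol{x}_{\star}))\,ds\,(\boldsymbol{y}-\boldsymbol{x}_{\star})$ and show the averaged derivative is invertible (again via a Neumann series, using that the segment lies in $B_{r}(\boldsymbol{x}_{0})$ and $\mu\nu r\le 1$ on the boundary case — one checks $\int_0^1\mu\nu\|\boldsymbol{x}_\star+s(\boldsymbol{y}-\boldsymbol{x}_\star)-\boldsymbol{x}_0\|\,ds<1$ strictly when $\lambda\mu\nu<\tfrac12$), forcing $\boldsymbol{y}=\boldsymbol{x}_{\star}$. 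I expect the bookkeeping in the induction — simultaneously tracking the three interlocking estimates (step size, distance from $\boldsymbol{x}_{0}$, and the derivative-inverse bound) and verifying they all close up against the scalar majorant — to be the main technical obstacle; the individual estimates are routine, but getting the constants to line up with $-1/p'(t_k)$ exactly requires care. A cleaner alternative, if one is willing to cite it, is to invoke the standard Newton--Kantorovich theorem verbatim from a reference such as Ortega--Rheinboldt, but since the statement is used as a self-contained appendix tool, the majorant proof above is the appropriate level of detail.
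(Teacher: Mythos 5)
The paper does not actually prove this theorem: its ``proof'' is a one-line citation, stating that the result is a special case of Theorem~3 in Ciarlet--Mardare (with a secondary pointer to Fern\'andez et al.\ for variants). You instead give a self-contained majorant proof in the classical Kantorovich style: introduce the scalar quadratic $p(t)=\tfrac{\nu}{2}t^{2}-\tfrac{1}{\mu}t+\tfrac{\lambda}{\mu}$ whose roots are $\rho$ and $r$, run the scalar Newton iteration $t_{k+1}=t_{k}-p(t_{k})/p'(t_{k})$ from $t_{0}=0$, and prove by induction the interlocking bounds $\|\boldsymbol{x}_{k+1}-\boldsymbol{x}_{k}\|\le t_{k+1}-t_{k}$, $\|\boldsymbol{x}_{k}-\boldsymbol{x}_{0}\|\le t_{k}$, and $\|D\mathcal{T}(\boldsymbol{x}_{k})^{-1}\|\le -1/p'(t_{k})$, the last via Neumann series around $\mathcal{A}=D\mathcal{T}(\boldsymbol{x}_{0})$. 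The constants line up exactly as you expect: $p(t_{k})=\tfrac{\nu}{2}(t_{k}-t_{k-1})^{2}$ by the defining relation of the scalar iteration, and $-1/p'(t_{k})=\mu/(1-\mu\nu t_{k})$ matches the Neumann bound, so the inductive step closes. Your uniqueness argument via the averaged derivative and $\mu\nu\int_{0}^{1}\|\cdot\|\,ds<1$ is also standard and correct, though one should be slightly careful on the closed ball $\overline{B_{\rho}}$ in the boundary case $\lambda\mu\nu=\tfrac12$ (excluded here by the strict inequality, as you note). In short, your outline is mathematically sound and would compile into a valid full proof; the paper simply delegates to the literature, which is the more economical choice for an appendix lemma, but your self-contained version is the right level of detail if one wants the appendix to stand alone.
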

\begin{proof}
This is a subset of theorem 3 in \cite{Ciarlet2012249}. See also
\cite{Fernandez20201} for further versions of the theorem.
\end{proof}

\section{Existence and uniqueness of the invariant torus}

An invariant foliation is always specific to an underlying invariant
object, which in our case is an invariant torus. The torus can be
represented as a graph over the space $\mathbb{T}^{d},$ which we
denote by

The existence of a unique invariant torus can only be guaranteed locally
near an approximate solution of the invariance equation (\ref{eq:TOR-invariances}).
The condition besides having a good approximation is that the approximate
solution is hyperbolic. The following result is well-know, for example
from \cite{HaroTori2006,Haro2016}.
\begin{thm}
Assume that $\boldsymbol{K}_{0}\in C^{a}\left(\mathbb{T}^{d},X\right)$
such that $\boldsymbol{E}\left(\boldsymbol{\theta}\right)=\boldsymbol{F}\left(\boldsymbol{K}_{0}\left(\boldsymbol{\theta}\right),\boldsymbol{\theta}\right)-\boldsymbol{K}_{0}\left(\boldsymbol{\theta}+\omega\right)$
and that $1\in\mathcal{R}\left(\boldsymbol{A}_{0};\boldsymbol{\omega}\right)$,
where $\boldsymbol{A}_{0}\left(\boldsymbol{\theta}\right)=D_{1}\boldsymbol{F}\left(\boldsymbol{K}_{0}\left(\boldsymbol{\theta}\right),\boldsymbol{\theta}\right)$.
Then for a sufficiently small $\boldsymbol{E}\in C^{a}\left(\mathbb{T}^{d},X\right)$,
equation (\ref{eq:TOR-invariances}) has a unique solution for $\boldsymbol{K}$
in $C^{a}\left(\mathbb{T}^{d},X\right)$ in a small neighbourhood
of $\boldsymbol{K}_{0}$.
\end{thm}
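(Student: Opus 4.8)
The plan is to apply the Newton--Kantorovich theorem (Theorem~\ref{thm:Kantorovich}) to the nonlinear operator
\[
\left(\mathcal{T}\boldsymbol{K}\right)\left(\boldsymbol{\theta}\right)=\boldsymbol{F}\left(\boldsymbol{K}\left(\boldsymbol{\theta}\right),\boldsymbol{\theta}\right)-\boldsymbol{K}\left(\boldsymbol{\theta}+\boldsymbol{\omega}\right),
\]
acting on an open neighbourhood of $\boldsymbol{K}_{0}$ in the Banach space $C^{a}\left(\mathbb{T}^{d},X\right)$ of functions that extend analytically and boundedly to a fixed complex strip $\left\{\left|\mathrm{Im}\,\boldsymbol{\theta}\right|<r_{0}\right\}$, with the sup norm. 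Zeros of $\mathcal{T}$ are exactly the analytic solutions of the invariance equation (\ref{eq:TOR-invariances}), and $\mathcal{T}\boldsymbol{K}_{0}=\boldsymbol{E}$ by hypothesis. Since $\boldsymbol{F}$ is analytic, the composition (Nemytskii) operator $\boldsymbol{K}\mapsto\boldsymbol{F}\left(\boldsymbol{K}\left(\cdot\right),\cdot\right)$ is analytic, hence $C^{1}$ with locally Lipschitz derivative, on the set of $\boldsymbol{K}$ whose graph stays inside the analyticity domain of $\boldsymbol{F}$, which contains a ball around $\boldsymbol{K}_{0}$; the shift $\boldsymbol{K}\mapsto\boldsymbol{K}\left(\cdot+\boldsymbol{\omega}\right)$ is a linear isometry because $\boldsymbol{\omega}$ is a real rotation. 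Thus $\mathcal{T}\in C^{1}$ near $\boldsymbol{K}_{0}$, with Fréchet derivative the cohomological operator
\[
\left(D\mathcal{T}\left(\boldsymbol{K}_{0}\right)\boldsymbol{\xi}\right)\left(\boldsymbol{\theta}\right)=\boldsymbol{A}_{0}\left(\boldsymbol{\theta}\right)\boldsymbol{\xi}\left(\boldsymbol{\theta}\right)-\boldsymbol{\xi}\left(\boldsymbol{\theta}+\boldsymbol{\omega}\right).
\]

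Next I would verify the three Newton--Kantorovich constants. Bounded invertibility of $D\mathcal{T}\left(\boldsymbol{K}_{0}\right)$ is essentially Corollary~\ref{cor:ED-linear-solution}: since $1\in\mathcal{R}\left(\boldsymbol{A}_{0};\boldsymbol{\omega}\right)$, for every analytic $\boldsymbol{\eta}$ the equation $\boldsymbol{\xi}\left(\boldsymbol{\theta}+\boldsymbol{\omega}\right)=\boldsymbol{A}_{0}\left(\boldsymbol{\theta}\right)\boldsymbol{\xi}\left(\boldsymbol{\theta}\right)+\boldsymbol{\eta}\left(\boldsymbol{\theta}\right)$ has a unique analytic solution, produced by the split forward/backward iteration of that corollary; the exponential-dichotomy inequalities give geometric convergence with rate strictly below one, because $\mathcal{R}$ contains an open interval about $1$, so the solution operator is bounded on the strip with a norm $\mu<\infty$ independent of $\boldsymbol{\eta}$, and the width $r_{0}$ is preserved since the rotation is an isometry and multiplication by the analytic $\boldsymbol{A}_{0}$ is bounded on the strip. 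This gives $\left\Vert D\mathcal{T}\left(\boldsymbol{K}_{0}\right)^{-1}\right\Vert\le\mu$ and, via $\mathcal{T}\boldsymbol{K}_{0}=\boldsymbol{E}$, the residual bound $\lambda:=\left\Vert D\mathcal{T}\left(\boldsymbol{K}_{0}\right)^{-1}\mathcal{T}\boldsymbol{K}_{0}\right\Vert\le\mu\left\Vert\boldsymbol{E}\right\Vert$. Finally $D\mathcal{T}\left(\boldsymbol{K}\right)\boldsymbol{\xi}=D_{1}\boldsymbol{F}\left(\boldsymbol{K}\left(\cdot\right),\cdot\right)\boldsymbol{\xi}-\boldsymbol{\xi}\left(\cdot+\boldsymbol{\omega}\right)$, so $D\mathcal{T}\left(\boldsymbol{K}\right)-D\mathcal{T}\left(\hat{\boldsymbol{K}}\right)$ is multiplication by $D_{1}\boldsymbol{F}\left(\boldsymbol{K}\left(\cdot\right),\cdot\right)-D_{1}\boldsymbol{F}\left(\hat{\boldsymbol{K}}\left(\cdot\right),\cdot\right)$, and analyticity of $D_{1}\boldsymbol{F}$ yields a finite Lipschitz constant $\nu$ on a fixed small ball $B_{r}\left(\boldsymbol{K}_{0}\right)$.

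With $\mu$ and $\nu$ fixed by $\boldsymbol{F}$ and $\boldsymbol{K}_{0}$ and $\lambda\le\mu\left\Vert\boldsymbol{E}\right\Vert$, choosing $\left\Vert\boldsymbol{E}\right\Vert$ small enough makes $\lambda\mu\nu<\tfrac12$ and shrinks $\rho=\left(1-\sqrt{1-2\lambda\mu\nu}\right)/\left(\mu\nu\right)$ so that $B_{r}\left(\boldsymbol{K}_{0}\right)$ lies inside the region where the above estimates are valid; Theorem~\ref{thm:Kantorovich} then produces a unique zero $\boldsymbol{K}$ of $\mathcal{T}$ in $\overline{B_{\rho}\left(\boldsymbol{K}_{0}\right)}$, i.e. a unique analytic invariant torus near $\boldsymbol{K}_{0}$. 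I expect the only genuine work to be the quantitative bounded invertibility of the cohomological operator on a fixed analyticity strip --- turning the dichotomy constant $C$ and the geometric rate implicit in $\mathcal{R}\left(\boldsymbol{A}_{0};\boldsymbol{\omega}\right)$ into a strip-uniform bound $\mu$ with no loss of analyticity width; the smoothness of the composition operator, the Lipschitz estimate, and the final smallness bookkeeping are routine.
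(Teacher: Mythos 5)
Your proposal is correct and follows essentially the same route as the paper: both set up the residual of the invariance equation as a nonlinear operator on $C^{a}\left(\mathbb{T}^{d},X\right)$, invert the linearised (cohomological) operator via Corollary \ref{cor:ED-linear-solution} using $1\in\mathcal{R}\left(\boldsymbol{A}_{0};\boldsymbol{\omega}\right)$, bound the Lipschitz constant of the derivative from analyticity of $\boldsymbol{F}$, and close the argument with the Newton--Kantorovich theorem by taking $\left\Vert \boldsymbol{E}\right\Vert$ small. The only difference is cosmetic: the paper writes the operator in the shifted variable $\boldsymbol{\Delta}=\boldsymbol{K}-\boldsymbol{K}_{0}$ with the nonlinearity $\boldsymbol{N}$ split off, whereas you work directly with $\boldsymbol{K}$.
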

\begin{proof}
Since this has been detailed in \cite{HaroTori2006}, we just provide
a sketch of the proof. Define the operator $\mathcal{T}:C^{a}\left(\mathbb{T}^{d},X\right)\to C^{a}\left(\mathbb{T}^{d},X\right)$
as
\begin{equation}
\mathcal{T}\left(\boldsymbol{\Delta}\right)\left(\boldsymbol{\theta}\right)=\boldsymbol{\Delta}\left(\boldsymbol{\theta}+\omega\right)-\boldsymbol{A}\left(\boldsymbol{\theta}\right)\boldsymbol{\Delta}\left(\boldsymbol{\theta}\right)-\boldsymbol{N}\left(\boldsymbol{\Delta}\left(\boldsymbol{\theta}\right),\boldsymbol{\theta}\right)-\boldsymbol{E}\left(\boldsymbol{\theta}\right).\label{eq:TOR-implicit}
\end{equation}
The Frechet derivative of operator $\mathcal{T}$at the origin is
\[
\left(D\mathcal{T}\left(\boldsymbol{\Delta}\right)\boldsymbol{\kappa}\right)\left(\boldsymbol{\theta}\right)=\boldsymbol{\kappa}\left(\boldsymbol{\theta}+\omega\right)-\left[\boldsymbol{A}\left(\boldsymbol{\theta}\right)+D_{1}\boldsymbol{N}\left(\boldsymbol{\Delta}\left(\boldsymbol{\theta}\right),\boldsymbol{\theta}\right)\right]\boldsymbol{\kappa}\left(\boldsymbol{\theta}\right).
\]
Now we refer to theorem \ref{thm:Kantorovich} to show that (\ref{eq:TOR-implicit})
has a unique solution. By our assumption $1\in\mathcal{R}\left(\boldsymbol{A};\boldsymbol{\omega}\right)$,
$D\mathcal{T}\left(\boldsymbol{0}\right)$ is invertible due to corollary
\ref{cor:ED-linear-solution}, hence $\mu$ is finite. Since $D_{1}\boldsymbol{N}$
is linearly increasing with $\boldsymbol{\Delta}$ the Lipschitz constant
$\nu$ is also finite. In addition we can make $\lambda$ as small
as necessary because $\boldsymbol{E}$ is sufficiently small due to
$\boldsymbol{K}_{0}$ being a good approximation. Finally $\left[D\mathcal{T}\left(\boldsymbol{\Delta}\right)\boldsymbol{\kappa}-D\mathcal{T}\left(\hat{\boldsymbol{\Delta}}\right)\boldsymbol{\kappa}\right]\left(\boldsymbol{\theta}\right)=\left[D_{1}\boldsymbol{N}\left(\hat{\boldsymbol{\Delta}}\left(\boldsymbol{\theta}\right),\boldsymbol{\theta}\right)-D_{1}\boldsymbol{N}\left(\boldsymbol{\Delta}\left(\boldsymbol{\theta}\right),\boldsymbol{\theta}\right)\right]\boldsymbol{\kappa}\left(\boldsymbol{\theta}\right)$,
which implies that $\nu\to0$ as $r\to0$ with a linear rate. Therefore,
if $\boldsymbol{E}$ is sufficiently small, we can achieve $\lambda\mu\nu<\frac{1}{2}$,
which implies a unique solution to (\ref{eq:TOR-implicit}).
\end{proof}

\paragraph{Software}

The computer code that produced the results in this paper can be found
at \href{https://github.com/rs1909/InvariantModels}{https://github.com/rs1909/InvariantModels}

\bibliographystyle{plain}
\bibliography{../../../Bibliography/AllRef}

\end{document}